\newtheorem{theorem}{Theorem}[section]
\newtheorem{corollary}[theorem]{Corollary}
\newtheorem{proposition}[theorem]{Proposition}
\theoremstyle{definition}
\newtheorem{definition}[theorem]{Definition}
\newtheorem{remark}[theorem]{Remark}
\def\r{\mathbb R}
\def\h{\mathbb H}
\def\l{\mathbb L}
\def\s{\mathbb S}
\def\n{\mathbf n}
\def\d{\mathsf d}
\begin{document}

\title{Extension of a problem of Euler in $\h^2$ and in $\s^2$}
 
\author{Muhittin Evren Aydin}
\address{Department of Mathematics, Faculty of Science, Firat University, Elazig,  23200 Turkey}
\email{meaydin@firat.edu.tr}
\author{Antonio Bueno}
\address{Departamento de Matem\'aticas\\  Universidad de Murcia,  30100 Murcia, Spain}
\email{jabueno@um.es}
\author{Rafael L\'opez}
\address{Departamento de Geometr\'{\i}a y Topolog\'{\i}a Universidad de Granada 18071 Granada, Spain}
\email{rcamino@ugr.es}
\subjclass{53A04, 49K05, 74G65}
\keywords{moment of inertia, stationary curve, sphere, hyperbolic plane}
\begin{abstract}
In this paper, we extend the notion of stationary curves  with respect to the moment of inertia from a point $N$ in the Euclidean plane $\r^2$ to the case that the ambient space is either the hyperbolic plane $\h^2$ or the sphere $\s^2$.   We characterize the critical points of this energy in terms of the curvature of the curve and the distance to $N$.  In $\h^2$, we prove that the only closed stationary curves are circles centered at $N$. In $\s^2$, we estimate the value of $\alpha$ for closed curves according to the hemisphere of $\s^2$ in which the curve lies. In addition, we find the first integrals  of the ODEs that describe the parametrizations of stationary curves in both ambient spaces. Finally, we consider the energy minimization problem for curves connecting two points collinear with $N$, in particular solving the case of geodesics.
\end{abstract}

\maketitle
\section{The statement of problem}

 The purpose of this paper is to extend to the hyperbolic plane $\h^2$ and the sphere $\s^2$ the following problem investigated by Euler for planar curves: to find planar curves $\gamma\colon I\subset\r \to\r^2$ which   minimize the   energy 
$$
E_\alpha[\gamma]=\int_\gamma |\gamma(s)|^\alpha\, ds,
$$
where $\alpha \in\r$ is a parameter,  and $s$ is the   arc-length parameter (see \cite{eu}) . Critical points of the energy $E_\alpha$ are curves characterized by the equation 
\begin{equation}\label{eq0}
\kappa=\alpha\frac{\langle \n,\gamma\rangle}{|\gamma|^2},
\end{equation}
where $\kappa$ and $\n$ are the curvature and the unit normal vector of $\gamma$, respectively. Euler solved \eqref{eq0}, obtaining  explicit parametrizations of these curves. On the other hand, Mason  found the minimizers of the energy $E_2$ for all curves joining two fixed points of $\r^2$ \cite{ma}; see also \cite{ca,to}. Recently, Dierkes and the third author have obtained  a general approach to find the minimizers of $E_\alpha$  for all values of $\alpha$  \cite{dl1}. 

It is natural to extend this problem to the other geometries with constant curvature, namely, the hyperbolic plane $\h^2$ (negative curvature) and the sphere $\s^2$ (positive curvature). As in the Euclidean plane $\r^2$,   we     fix the reference point with respect to which  the intrinsic distance of the space is measured. For this, we use the following models for $\h^2$ and $\s^2$.  For the hyperbolic plane $\h^2$, we will consider the Lorentzian model and for the sphere $\s^2$, we see the sphere as the set of points of $\r^3$ equidistant from the origin. In both spaces, we take the north pole $N=(0,0,1)$ as the reference point. 

Let  $\gamma\colon I\to \h^2$ or $\s^2$ be  a curve parametrized by arc-length, denoted $\gamma=\gamma(s)$. Given $\alpha\in\r$, define the energy 
\begin{equation}\label{eq1}
E_\alpha[\gamma]=\int_\gamma \d(s)^\alpha \, ds, 
\end{equation}
where $\d(s)=\mbox{dist}(\gamma(s),N)$. If $\alpha=2$, then the energy represents the moment of inertia of   $\gamma$ with respect to the point $N$. In the general case for $\alpha$, we can interpret the energy $E_\alpha[\gamma]$ as a power of the moment of inertia with respect to $N$. If $\alpha=0$, then $E_\alpha$ is simply the length of the curve, and the corresponding critical points of the energy are geodesics in the space. From now on, we will discard this case. 

\begin{definition}
A curve $\gamma$ in $\h^2$ or $\s^2$ is called an $\alpha$-stationary curve if $\gamma$ is  a critical point of the energy $E_\alpha$.
\end{definition}

When it is not emphasized the value of $\alpha$, we simply say  stationary curves. The characterization of the stationary curves will be obtained by the Euler-Lagrange equations of the energy $E_\alpha$. For that, we will require along this paper that $\gamma$ does not cross the point $N$ in order to have differentiability of $E_\alpha$. 

Let $(x,y,z)$ be the canonical coordinates in $\r^3$. We denote by $\langle ,\rangle_\epsilon =(dx)^2+(dy)^2+\epsilon (dz)^2$ the Euclidean or Lorentzian metric according to whether $\epsilon=1$ or $\epsilon=-1$, respectively. For $\epsilon=1$, we simply write $\langle ,\rangle_\epsilon =\langle ,\rangle$. We will prove that a curve $\gamma$ in $\h^2$ or in $\s^2$ is an $\alpha$-stationary curve if and only if its curvature $\kappa$ satisfies
\begin{equation}\label{k0}
\kappa=\alpha\frac{\langle \n,\xi\rangle_\epsilon}{\d },
\end{equation}
where $\n$ denotes the unit normal vector  of $\gamma$ and $\xi$   is the unitary tangent vector  at $\gamma(s)$ of the geodesic joining $N$ with  $\gamma(s)$. 

If we compare Equation \eqref{k0} with its analogue \eqref{eq0} in the Euclidean plane $\r^2$, we arrive at an interesting conclusion about   the exponent $2$ in the denominator on the right hand-side of \eqref{eq0}, namely $|\gamma|^2$. In $\r^2$, the ray from the origin $0\in\r^2$ to the point $\gamma(s)$ is parametrized by $t\mapsto t\gamma(s)$, $t\geq 0$. At $\gamma(s)$, the unit tangent vector  to this ray is $\frac{\gamma(s)}{|\gamma(s)|}$. Denoting $\xi=\frac{\gamma}{|\gamma|}$, Equation \eqref{eq0} can be now expressed by 
\begin{equation}\label{eq00}
\kappa=\alpha\frac{\langle\n,\gamma\rangle}{|\gamma|^2}=\alpha\frac{\langle \n,\xi\rangle}{|\gamma|}=\alpha\frac{\langle \n,\xi\rangle}{\d},
\end{equation}
where $\d$ is the Euclidean distance from the origin $0$ to $\gamma(s)$.
This provides a new perspective on the classical characterization of stationary curves in $\r^2$ because the exponent $2$ of $|\gamma|$ in the denominator reduces to $1$. In addition, and as another interesting consequence, Equations \eqref{eq0} and \eqref{eq00} have the same form, which it is expectable independently of the space form.

Since we work in two different space forms, $\h^2$ and $\s^2$, we divide the investigation into two sections. As the arguments are similar in both spaces, we omit the details for $\s^2$. After deriving the Euler–Lagrange equations for the energy \eqref{eq1}, we provide the characterization \eqref{k0} of stationary curves (Propositions \ref{h-p1} and \ref{s-p1}). Such curves with constant curvature are determined in Theorem \ref{h-ccurv} and Proposition \ref{s-ccurv}. Finally, in Theorem \ref{t29} and \ref{t36}, we obtain the first integrals of the ODEs describing the parametrizations of stationary curves. 

In comparing the ambient spaces $\h^2$ and $\s^2$, two main differences appear: one related to closed stationary curves, and the other to energy minimization.
\begin{enumerate}
\item We prove that in $\h^2$, the only stationary curves are circles centered at $N$ (see Theorem \ref{t1}). In contrast to $\h^2$, the space $\s^2$ is compact. For closed curves in $\s^2$, we also determine the value of $\alpha$ according to the hemisphere of $\s^2$ in which the curve lies (see Theorem \ref{t12}).

\item Given two points $p_1$ and $p_2$ lying on the same geodesic with $N$, Theorems \ref{th1} and \ref{ts1} state that such geodesics in $\h^2$ or $\s^2$ are minimizers of the energy \eqref{eq1}. However, in $\s^2$ we exclude the case that $p_1$ and $p_2$ lie on the different rays and  the minimizing geodesic from $p_1$ to $p_2$ does not pass through $N$ because of the lack of  a suitable estimate involving the geodesic and $N$.
\end{enumerate}
 
\section{Stationary curves in hyperbolic plane} \label{sec-h}

In what follows, we investigate in $\h^2$, through separate subsections, the Euler–Lagrange equation for \eqref{eq1}, examples of stationary curves, applications of the maximum principle, parametrizations of such curves and, finally minimization of energy.

\subsection{The Euler-Lagrange equation}

Let $\l^3=(\r^3,\langle,\rangle_\epsilon)$ be the Lorentz-Minkowski space. The Lorentzian model of the hyperbolic plane $\h^2$ is the surface 
$$\h^2=\{(x,y,z)\in \l^3 \colon x^2+y^2-z^2=-1, z>0\}$$ endowed with the induced metric from $\l^3$. A parametrization of $\h^2$ is given by 
$$\Psi(u,v)=(\sinh (u)\cos (v),\sinh (u)\sin (v),\cosh (u)),\quad (u,v)\in\r^2.$$
We take the north pole $N=(0,0,1)$ as a reference point for the distance function. Let $\gamma\colon [a,b]\to\h^2$ be a curve, $\gamma=\gamma(t)$, given by \begin{equation}  \label{par-h}
\gamma(t)=\Psi(u(t),v(t))=(\sinh (u(t))\cos (v(t)),\sinh (u(t))\sin (v(t)),\cosh (u(t))),
\end{equation}
for some functions $u=u(t)$, $v=v(t)$ on $[a,b]$. Letting $|\gamma'|_\epsilon=\sqrt{|\langle \gamma',\gamma'\rangle_\epsilon|}$, the line element is $\sqrt{u'^2+\sinh^2 (u) v'^2}\,  dt
$ and the distance from $\gamma(t)$ to $N$ is $\d(t)=u(t)$. Then the expression of the energy \eqref{eq1} in coordinates $(u,v)$ is 
\begin{equation} \label{en-h}
E_\alpha[\gamma]=\int_a^b u^\alpha\sqrt{u'^2+\sinh^2 (u) v'^2}\, dt.
\end{equation}

We now derive the characterization \eqref{k0} of the critical points of $E_\alpha$   by means of the Euler-Lagrange equations corresponding to the functional \eqref{en-h}. First we determine the unit normal vector $\n(t)$ of $\gamma(t)$, which is given by
\begin{equation}\label{nn}
\begin{split}
\n(t)&=\frac{\gamma'(t)\times_\epsilon\gamma(t)}{|\gamma'(t)|_\epsilon} \\
&=\frac{1}{\sqrt{u'^2+\sinh^2 (u) v'^2}}
\begin{pmatrix}
u' \sin (v) + \sinh (u) \cosh (u) v' \cos (v) \\[0.6em]
\sinh (u) \cosh (u) v' \sin (v) - u' \cos (v) \\[0.6em]
(\sinh (u))^2 v'
\end{pmatrix},
\end{split}
\end{equation}
where $\times_\epsilon$ is the cross product in $\l^3$. On the other hand, the geodesic curvature $\kappa$ of $\gamma(t)$ is 
\begin{equation}\label{k1}
\begin{split}
\kappa(t)&=\frac{\langle\gamma''(t),\n(t)\rangle_\epsilon}{|\gamma'(t)|_\epsilon^2}\\
&=\frac{1}{|\gamma'(t)|_\epsilon^3}\left(v'\cosh (u)(2u'^2+v'^2\sinh^2(u))+\sinh (u)(u'v''-u''v')\right).
\end{split}
\end{equation}

Next, we set the integrand of \eqref{en-h} as
$$
J=J(u,u',v')=u^\alpha\sqrt{u'^2+\sinh^2 (u) v'^2} .
$$
The Euler-Lagrange equations are obtained by computing
\begin{equation} \label{el-h0} 
\frac{ \partial J}{\partial u}=\frac{d}{dt}\left(\frac{\partial J}{\partial u'}\right), \quad  \frac{ \partial J}{\partial v}=\frac{d}{dt}\left(\frac{\partial J}{\partial v'}\right).
\end{equation}
Notice that in the second equation, we have $\frac{\partial J}{\partial v}=0$. A computation of the two equations in   \eqref{el-h0}  gives 
\begin{equation*} 
\begin{split}
0&=u^{\alpha-1}v'\sinh (u)\Big(\alpha\sinh(u) v'(u'^2+(\sinh (u))^2 v'^2)\\
&+u\left(\sinh u(u'v''-u''v') +v'\cosh (u)(2u'^2+v'^2(\sinh (u))^2)\right)\Big),\\
0&=u^{\alpha-1}u'\sinh (u)\Big(\alpha\sinh(u) v'(u'^2+(\sinh (u))^2 v'^2)\\
&+u\left(\sinh u(u'v''-u''v') +v'\cosh (u)(2u'^2+v'^2(\sinh (u))^2)\right)\Big).
\end{split}
\end{equation*}
By the expression of $\kappa$ in \eqref{k1}, both equations write as
$$0=u^{\alpha-1}v'\sinh (u)\Big(\alpha\sinh(u) v'|\gamma'|_\epsilon^2+u\kappa|\gamma'|_\epsilon^3\Big),$$
$$0=u^{\alpha-1}u'\sinh (u)\Big(\alpha\sinh(u) v'|\gamma'|_\epsilon^2+u\kappa|\gamma'|_\epsilon^3\Big).$$
Since $\gamma$ is regular, then $u'\not=0$ or $v'\not=0$. Thus we deduce that the parenthesis in the above equations must vanish, that is, 
$$\alpha\sinh(u) v'|\gamma'|_\epsilon^2+u\kappa|\gamma'|_\epsilon^3=0.$$
As a conclusion, we have the following   characterization of the stationary curves.

\begin{proposition}\label{h-p}
Let $\gamma(t)$ be a curve in $\h^2$  parametrized by \eqref{par-h}. Then $\gamma$ is an $\alpha$-stationary curve if and only if  
\begin{equation} \label{el-h22}
\kappa=-\alpha\frac{v'\sinh u }{u|\gamma'|_\epsilon}.
\end{equation}
\end{proposition}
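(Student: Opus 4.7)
The plan is to apply the standard Euler--Lagrange machinery to the functional in \eqref{en-h} in the coordinates $(u,v)$ and then match the resulting expression against the explicit curvature formula \eqref{k1}. With the integrand
$$
J(u,u',v')=u^\alpha\sqrt{u'^2+\sinh^2(u)v'^2},
$$
I would first observe that $J$ is independent of $v$, so the second equation in \eqref{el-h0} reduces to the conservation law $\frac{d}{dt}\!\left(\partial J/\partial v'\right)=0$, while the first equation is the full two-term Euler--Lagrange equation for $u$. In particular, criticality of $\gamma$ for $E_\alpha$ under compactly supported variations is equivalent to both equations in \eqref{el-h0} holding.

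Next, I would compute $\partial J/\partial u$, $\partial J/\partial u'$ and $\partial J/\partial v'$, apply $d/dt$ to the last two, and clear the common denominator $(u'^2+\sinh^2(u)v'^2)^{1/2}=|\gamma'|_\epsilon$. After grouping terms, the key algebraic observation I need to verify is that the two Euler--Lagrange equations factor in the forms displayed in the excerpt:
$$
u^{\alpha-1}\,v'\sinh(u)\cdot P=0,\qquad u^{\alpha-1}\,u'\sinh(u)\cdot P=0,
$$
where the common factor is
$$
P=\alpha\sinh(u)v'\bigl(u'^2+\sinh^2(u)v'^2\bigr)+u\bigl(\sinh(u)(u'v''-u''v')+v'\cosh(u)(2u'^2+v'^2\sinh^2(u))\bigr).
$$
The crucial simplification step is to rewrite $P$ using $|\gamma'|_\epsilon^{\,2}=u'^2+\sinh^2(u)v'^2$ together with the curvature expression \eqref{k1}, which identifies the bracketed $u$-term as exactly $\kappa\,|\gamma'|_\epsilon^{\,3}$. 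Consequently $P=\alpha\sinh(u)v'|\gamma'|_\epsilon^{\,2}+u\kappa|\gamma'|_\epsilon^{\,3}$.

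To conclude, I would invoke that $\gamma$ avoids the pole $N$, so $u>0$ and hence $\sinh u>0$ and $u^{\alpha-1}\neq 0$, and that $\gamma$ is regular, so $(u',v')\neq(0,0)$ everywhere; at least one of the two factored equations therefore forces $P=0$. Dividing by $u|\gamma'|_\epsilon^{\,3}$ yields $\kappa=-\alpha\,v'\sinh(u)/(u|\gamma'|_\epsilon)$, which is precisely \eqref{el-h22}. The converse direction is immediate, since retracing the algebra shows that $P=0$ implies both Euler--Lagrange equations. The only real obstacle is the bookkeeping in the partial-derivative computation; the substantive content is the fortunate factorization that allows the $\kappa$-formula to be recognized inside $P$.
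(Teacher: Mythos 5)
Your proposal is correct and follows essentially the same route as the paper: compute the Euler--Lagrange equations for $J=u^\alpha\sqrt{u'^2+\sinh^2(u)v'^2}$, observe that both factor through the common expression $P$, recognize the bracketed term as $\kappa|\gamma'|_\epsilon^{3}$ via \eqref{k1}, and use regularity of $\gamma$ (together with $u>0$) to force $P=0$. Nothing further is needed.
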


We now establish an expression of \eqref{el-h22} which allows to compare   with its analogue \eqref{eq0} in the Euclidean plane. 

Consider all geodesics from $N$ parametrized by arc-length. We say that these geodesics  are {\it rays} from $N$. For any point $p\in\h^2$, there is a unique ray from $N$ passing through $p$. Denote by
$$\xi=\xi(p)$$
the tangent vector of this ray at $p$. We next obtain an expression of $\xi(p)$ in the coordinates $\Psi=\Psi(u,v)$.   The rays from $N$ are given by the intersections of $\h^2$ with the planes containing the $z$-axis, and are given by
$$
u\mapsto \Psi(u,v_0), \quad u>0,
$$
for all $v_0\in \r$. Notice that $|\Psi_u|_\epsilon=1$. Thus   
$$\xi(\gamma(t))=\Psi_u(u(t),v(t))= (\cosh(u(t))\cos(v),\cosh(u(t))\sin(v),\sinh(u(t))).$$
From the expression of $\n(t)$ in \eqref{nn}, we have
$$
\langle \n,\xi\rangle_\epsilon=-\frac{v'\sinh (u)}{|\gamma'|_\epsilon}.
$$
Comparing this identity with Proposition \ref{h-p}, we arrive at the final characterization of $\alpha$-stationary curves in $\h^2$.

\begin{proposition}\label{h-p1}
Let $\gamma(t)$ be a curve in $\h^2$  parametrized by \eqref{par-h}. Then $\gamma$ is an $\alpha$-stationary curve if and only if  
\begin{equation} \label{el-h2}
\kappa=\alpha\frac{\langle \n,\xi\rangle_\epsilon}{\d }.
\end{equation}
Here, $\n$ denotes the unit normal vector of $\gamma$, $\d$ is the distance from $N$, and $\xi$ is the unitary tangent vector of the ray at $\gamma(t)$ which joins $N$ with $\gamma$.  
\end{proposition}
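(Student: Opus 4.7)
The plan is to combine Proposition \ref{h-p} with an explicit computation of $\langle\n,\xi\rangle_\epsilon$ in the coordinates $\Psi(u,v)$, noting that $\d=u$ along $\gamma$. In other words, I want to show that the right-hand side of \eqref{el-h22} equals $\alpha\langle\n,\xi\rangle_\epsilon/\d$, so that the two characterizations of $\alpha$-stationarity coincide.

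First I would identify the rays from $N$. Because geodesics in the Lorentzian model of $\h^2$ are obtained by intersecting $\h^2$ with planes through the origin of $\l^3$, the rays from $N=(0,0,1)$ are exactly the intersections with the vertical planes containing the $z$-axis. In the $\Psi$-coordinates these are the curves $u\mapsto\Psi(u,v_0)$ with $v_0$ constant. Since $|\Psi_u|_\epsilon=1$, the unit tangent to the ray at $\gamma(t)$ is simply
\[
\xi(\gamma(t))=\Psi_u(u(t),v(t))=(\cosh u\cos v,\cosh u\sin v,\sinh u).
\]
This step is purely geometric and requires no differentiation of $\gamma$.

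Next I would plug this $\xi$ and the expression of $\n$ from \eqref{nn} into the Lorentzian inner product $\langle\cdot,\cdot\rangle_\epsilon=dx^2+dy^2-dz^2$. After cancellations the $x$- and $y$-components produce a term $-u'\cosh u\sinh u\,(\sin v\cos v-\cos v\sin v)$ which vanishes, and the $z$-term contributes $-(\sinh u)^2 v'\sinh u/|\gamma'|_\epsilon$; collecting everything yields
\[
\langle\n,\xi\rangle_\epsilon=-\frac{v'\sinh u}{|\gamma'|_\epsilon}.
\]
This is the one calculation that has to be carried out carefully, but it is a short and purely algebraic manipulation.

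Finally, I would combine this identity with Proposition \ref{h-p}. Since $\d(t)=u(t)$, equation \eqref{el-h22} rewrites as
\[
\kappa=-\alpha\frac{v'\sinh u}{u|\gamma'|_\epsilon}=\alpha\frac{\langle\n,\xi\rangle_\epsilon}{u}=\alpha\frac{\langle\n,\xi\rangle_\epsilon}{\d},
\]
which is \eqref{el-h2}. The converse direction is automatic because every step used is an equivalence. I do not anticipate a real obstacle here: the sign of the $z$-component of $\n$ and the sign conventions of $\times_\epsilon$ in $\l^3$ are the only places where a careful bookkeeping is needed, and both have been fixed by the formula for $\n$ in \eqref{nn}.
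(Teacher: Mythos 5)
Your proposal is essentially the paper's own proof: identify the rays from $N$ as $u\mapsto\Psi(u,v_0)$ so that $\xi=\Psi_u$, compute $\langle\n,\xi\rangle_\epsilon$ from \eqref{nn}, and substitute into \eqref{el-h22} using $\d=u$. One small bookkeeping remark: in the inner product the $x$- and $y$-components do not merely cancel but contribute the term $\sinh(u)\cosh^2(u)\,v'/|\gamma'|_\epsilon$, which combines with the $z$-term $-\sinh^3(u)\,v'/|\gamma'|_\epsilon$ via $\cosh^2(u)-\sinh^2(u)=1$ to give the identity you (and the paper) state, so the conclusion is unaffected.
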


\begin{remark}\label{h-rem}
Any isometry that preserves the solutions of \eqref{el-h2} requires to fix $N$. Hence, rotations about the $z$-axis and reflections about planes containing the $z$-axis preserve the solutions of \eqref{el-h2}.
\end{remark}

\subsection{Examples of stationary curves}

We show some examples of stationary curves in the family of circles and geodesics.
\begin{enumerate}
\item Geodesics through $N$ are $\alpha$-stationary curves for all $\alpha$. A such a geodesic is parametrized by $\gamma(t)=\Psi(t,v_0)$. Then $\n(t)=(\sin(v_0),-\cos(v_0),0)$ and $\xi(t)=(\cos (v_0) \cosh (u(t)),\sin (v_0) \cosh (u(t)),\sinh (u(t)))$. Thus $\langle \n(t),\xi(t)\rangle_\epsilon=0$. On the other hand, we know $\kappa=0$, hence \eqref{el-h2} holds for all $\alpha$.
\item Circles centered at $N$. If $r$ is the radius of the circle, we see that the circle an $\alpha$-stationary curve for $\alpha=r\cosh(r)$. A parametrization of the circle is 
$$\gamma(t)=\Psi(r,t)=(\sinh(r)\cos(t),\sinh(r)\sin(t),\cosh(r)).$$
Consider the   normal vector 
$$\n(t)=-(\cosh(r)\cos(t),\cosh(r)\sin(t),\sinh(r)).$$
Then, $\xi(t)=-\n(t)$ and  $\langle\n(t),\xi(t)\rangle_\epsilon=-1$. The curvature is given by 
$$\kappa=\frac{\langle\gamma''(t),\n(t)\rangle_\epsilon}{|\gamma'(t)|^2_\epsilon}=\coth(r).$$
Since $\d=r$, then \eqref{el-h2} holds for $\alpha=-r\coth(r)$.
\end{enumerate}

Recall that straight-lines in $\r^2$ passing through the origin are $\alpha$-stationary curves for all $\alpha$. These curves are analog to the example given in item (1). But, item (2) has a significant distinct: in $\r^2$, every circle of radius $r$ centered at the origin is an $\alpha$-stationary curve with $\alpha=-1$, independently of the value of $r$. This contrasts to with the circles of $\h^2$ at centered $N$ because the value of $\alpha$ varies with the radius. 

Motivated by these examples, we will find all stationary curves in $\h^2$ which have constant curvature. Notice that in $\h^2$, besides geodesics ($\kappa=0$) and circles ($\kappa>1$), there are more curves with constant curvature, namely, equidistant lines ($0<\kappa<1$) and horocycles ($\kappa=1$).   For the next computations, we need to have the description of the curves of $\h^2$ with constant curvature.  

\begin{proposition} \label{p-tau-h} 
The curves in $\h^2$ with constant curvature are described as follows. Let $\delta\in \{-1,0,1\},$ and let $\tau\in\r$. Then any curve with constant curvature is given by
$$C_{a,\tau}=\{p\in\h^2\colon\langle p,a\rangle_\epsilon=\tau\},$$
where $\delta=\langle a,a\rangle_\epsilon$. The normal is
\begin{equation}\label{n4}
\n(p)=-\lambda(\tau p+a),\quad\lambda=\frac{1}{\sqrt{\tau^2+\delta}},
\end{equation}
and the curvature is $\kappa=\lambda\tau$. The types are the following:
\begin{enumerate}
\item Geodesics. Here $\delta=1$ and $\tau=0$. We have $\kappa=0$.
\item Equidistant line. Here $\delta=1$ and $\tau\not=0$. Now $\kappa=\frac{\tau}{\sqrt{\tau^2+1}}\in (-1,0)\cup (0,1)$.
\item Horocycles. Here $\delta=0$ and $\tau\not=0$. The curvature is $\kappa=1$.
\item Circles. Here $\delta=-1$ and $|\tau|>1$. The curvature is $\kappa=\frac{\tau}{\sqrt{\tau^2-1}}$.
\end{enumerate}
\end{proposition}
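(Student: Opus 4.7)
The plan is to establish the classification in two steps. First I will show that any curve in $\h^2$ of constant geodesic curvature must lie in the intersection of $\h^2$ with an affine plane of $\l^3$. Second, I will parametrize each such intersection, compute the unit normal and the curvature directly, and sort the result into the four cases by the causal character of $a$.

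For the first step I parametrize $\gamma$ by arc-length. Since $\langle \gamma,\gamma\rangle_\epsilon=-1$ and $\langle T,T\rangle_\epsilon=1$ with $T=\gamma'$, the triple $\{\gamma,T,\n\}$ is an orthonormal basis of $\l^3$ along $\gamma$ (with $\gamma$ timelike), and pairing $T'$ and $\n'$ with this frame yields the Frenet-type equations
\begin{equation*}
T' = \gamma + \kappa\n,\qquad \n' = -\kappa T.
\end{equation*}
The decisive observation is that when $\kappa$ is constant, the vector $W:=\kappa\gamma+\n$ satisfies $W'=\kappa T-\kappa T=0$, so $W\equiv A$ is a constant vector in $\l^3$. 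Pairing with $\gamma$ gives $\langle \gamma,A\rangle_\epsilon=-\kappa$, hence $\gamma$ lies in an affine plane, and a direct computation yields $\langle A,A\rangle_\epsilon=1-\kappa^2$. Normalizing $A$ by a suitable scalar produces a vector $a$ with $\delta=\langle a,a\rangle_\epsilon\in\{-1,0,1\}$ together with a value $\tau$ such that $\gamma\subset C_{a,\tau}$.

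For the converse and the explicit formulas, I would start from any pair $(a,\tau)$ with $\tau^2+\delta>0$ and verify that the candidate $\n=-\lambda(\tau p+a)$ is a unit vector tangent to $\h^2$ at $p$ and orthogonal to $T$ along $\gamma$. Tangency follows from $\langle \tau p+a,p\rangle_\epsilon=-\tau+\tau=0$; the unit norm from $\langle \tau p+a,\tau p+a\rangle_\epsilon=\tau^2+\delta=\lambda^{-2}$; and orthogonality to $T$ from differentiating $\langle \gamma,\gamma\rangle_\epsilon=-1$ and $\langle \gamma,a\rangle_\epsilon=\tau$ along $\gamma$. Differentiating $\n$ then gives $\n'=-\lambda\tau T$, which combined with the Frenet relation $\n'=-\kappa T$ forces $\kappa=\lambda\tau$ and in particular constancy of the curvature. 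Splitting on $\delta$ with the constraint $\tau^2+\delta>0$ recovers the four types: $\delta=1,\tau=0$ produces geodesics; $\delta=1,\tau\neq 0$ produces equidistant lines with $|\kappa|<1$; $\delta=0,\tau\neq 0$ produces horocycles with $|\kappa|=1$; and $\delta=-1,|\tau|>1$ produces circles with $|\kappa|>1$.

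I expect the main obstacle to be bookkeeping rather than conceptual: one must fix sign and orientation conventions for $\n$ consistently with the formula $\n=-\lambda(\tau p+a)$, verify that for each admissible $(a,\tau)$ the locus $C_{a,\tau}$ is nonempty and connected (so it really is a single curve, compatible with the restriction $z>0$ of the Lorentzian model), and match the scaling in the normalization $A=\mu a$ so that $\kappa=\lambda\tau$ emerges with the sign asserted in the statement.
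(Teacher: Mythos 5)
The paper states Proposition \ref{p-tau-h} without proof, quoting it as the known classification of constant-curvature curves in $\h^2$, so there is no in-paper argument to compare against; judged on its own, your proof is correct and is the standard one. Both halves check out: with the frame $\{\gamma,T,\n\}$ (with $\gamma$ timelike) the Frenet relations are indeed $T'=\gamma+\kappa\n$ and $\n'=-\kappa T$, so $W=\kappa\gamma+\n$ is parallel when $\kappa$ is constant, and the computations $\langle \gamma,W\rangle_\epsilon=-\kappa$, $\langle W,W\rangle_\epsilon=1-\kappa^2$ place $\gamma$ on a set $C_{a,\tau}$ whose causal type is governed by $|\kappa|$ versus $1$; conversely, for $C_{a,\tau}$ the vector $-\lambda(\tau p+a)$ is verified to be the unit normal and differentiating it gives $\kappa=\lambda\tau$. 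Your closing list of ``bookkeeping'' items is accurately identified and does need to be carried out: the sign in the normalization $A=\mu a$ must be chosen as $\mu=-\lambda$ so that both $\n=-\lambda(\tau p+a)$ and $\kappa=\lambda\tau$ (rather than $-\lambda\tau$) come out as stated; one should note $A\neq 0$ in the lightlike case (since $A=0$ would force the spacelike $\n$ to equal $-\kappa\gamma$, which is timelike); and the restrictions $\tau\neq 0$ for $\delta=0$ and $|\tau|>1$ for $\delta=-1$ are exactly the nonemptiness conditions for $C_{a,\tau}$ to be a curve rather than empty or a point. None of these affects the structure of the argument.
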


The classification of the   stationary curves in $\h^2$ of constant curvature is given in the following result.

\begin{theorem} \label{h-ccurv}
The only $\alpha$-stationary curves in $\h^2$ with constant curvature are:
\begin{enumerate}
\item Geodesics passing through $N$. This holds for all value of $\alpha$.
\item Circles of radius $r$ centered at $N$. This holds for $\alpha=-r\coth (r)$ and all $r>0$. 
\end{enumerate}
\end{theorem}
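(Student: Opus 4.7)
The plan is to combine the classification in Proposition~\ref{p-tau-h} with the stationary characterization \eqref{el-h2} and to show that among the four families of curves of constant curvature in $\h^2$, only geodesics through $N$ and circles centered at $N$ can be stationary. Existence for these two cases has already been verified in items (1)--(2) of the previous subsection, so the remaining task is to rule out all other possibilities.

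First I would simplify the stationary equation using $\n = -\lambda(\tau p + a)$ from \eqref{n4}. Since $\xi(p) = \Psi_u$ is tangent to $\h^2$ while $p$ is Lorentz-orthogonal to $\h^2$, the inner product $\langle p, \xi\rangle_\epsilon$ vanishes, giving $\langle \n, \xi\rangle_\epsilon = -\lambda \langle a, \xi\rangle_\epsilon$. Combined with $\kappa = \lambda\tau$ and $\d = u$, equation \eqref{el-h2} becomes the identity $\tau u + \alpha\langle a, \xi\rangle_\epsilon = 0$ along the curve. In the coordinates $\Psi(u,v)$, setting $A(v) = a_1 \cos v + a_2 \sin v$, the curve constraint $\langle p, a\rangle_\epsilon = \tau$ reads $A\sinh u - a_3 \cosh u = \tau$, while the stationary condition takes the form $\tau u + \alpha(A\cosh u - a_3 \sinh u) = 0$.

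Next I would split into cases according to whether $u$ is constant along the curve. If $u \equiv r$, the second equation forces $A(v)$ to be constant in $v$, hence $a_1 = a_2 = 0$; the normalization $\delta \in \{-1,0,1\}$ then reduces to $a = (0,0,\pm 1)$, and back-substitution identifies the curve as the circle $u = r$ with $\alpha = -r\coth r$. If $u$ is non-constant, I would eliminate $v$ using the curve constraint; after simplification via $\cosh^2 u - \sinh^2 u = 1$, the stationary equation collapses to the identity $\tau u \sinh u + \alpha \tau \cosh u + \alpha a_3 = 0$ in the single variable $u$ on an open interval. Differentiating once yields $\tau((1+\alpha)\sinh u + u \cosh u) = 0$, while two further derivatives give $(3+\alpha)\sinh u + u\cosh u = 0$; subtraction then forces $\sinh u = 0$ unless $\tau = 0$, so $\tau = 0$ and the identity forces $a_3 = 0$ since $\alpha \neq 0$. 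The curve equation then reduces to $A(v) = 0$, i.e., a constant value of $v$, which is precisely a geodesic through $N$.

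The main obstacle is the non-constant $u$ case, where the stationary condition \emph{a priori} involves both coordinates. The key trick is to use the defining equation $\langle p, a\rangle_\epsilon = \tau$ together with the hyperbolic Pythagorean identity to eliminate $v$ completely, reducing everything to an analytic identity in $u$ alone; once this is achieved, successive differentiation kills all remaining possibilities mechanically.
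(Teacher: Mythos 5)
Your proposal is correct and follows essentially the same route as the paper: both reduce \eqref{el-h2} via Proposition~\ref{p-tau-h} and the identity $\langle\n,\xi\rangle_\epsilon=-\lambda\langle a,\xi\rangle_\epsilon$ to the single relation $\tau u\sinh u+\alpha\tau\cosh u+\alpha a_3=0$ along the curve, and then rule out everything except geodesics through $N$ and circles centered at $N$. The only cosmetic differences are that you organize the cases by whether $u$ is constant (the paper splits on $\tau=0$ versus $\tau\neq0$) and that you justify the key step by successive differentiation, which is just an explicit verification of the linear independence of $\{1,\cosh u,\,u\sinh u\}$ that the paper invokes directly.
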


\begin{proof}
Let $C_{a,\tau}$ be a curve with constant curvature. Let  $a=(a_1,a_2,a_3) \in \l^3$ and $p\in C_{a,\tau}$ given by  $p=\Psi(u(t),v(t))$. Since $\langle p,a\rangle_\epsilon=\tau$, we have 
\begin{equation} \label{tau-ah0}
\tau=a_1\sinh(u(t))\cos(v(t))+a_2\sinh(u(t))\sin(v(t))-a_3\cosh(u(t)).
\end{equation}
Moreover, we have
$$
\langle a,\xi \rangle_\epsilon=a_1\cosh(u(t))\cos(v(t))+a_2\cosh(u(t))\sin(v(t))-a_3\sinh(u(t)).
$$
Then, it follows that 
\begin{equation}\label{n5}
\langle a,\xi \rangle_\epsilon =\frac{\tau\cosh(u(t))+a_3}{\sinh(u)}.
\end{equation}
Since $\langle p, \xi\rangle_\epsilon=0$, then from the expression of $\n$ in \eqref{n4} we have 
 $\langle\n,\xi\rangle_\epsilon=-\lambda\langle a,\xi\rangle_\epsilon$. From \eqref{n5}, and because $\kappa=\lambda\tau$, Equation \eqref{el-h2} writes as
$$\tau=-\alpha\frac{\tau\cosh(u(t))+a_3}{u(t)\sinh(u(t))},$$
or equivalently, 
\begin{equation} \label{tau-ah1}
\alpha\tau\cosh(u(t))+\tau u(t)\sinh(u(t))+\alpha a_3=0.
\end{equation}
We distinguish two cases:
\begin{enumerate}
\item Case $\tau=0$. In this case, Equation \eqref{tau-ah1} implies $a_3=0$ and so by \eqref{tau-ah0} it must be $v(t)=v_0\in\r$. Consequently, these geodesics cross  the point $N$, and are $\alpha$-stationary curves, for every value of $\alpha$.

\item Case $\tau\neq 0$. If $u(t)$ is not a constant function, and because $u(t)>0$,  then the linearly independence of the set $\{\cosh(u(t)),u(t)\sinh(u(t))\}$ yields a contradiction in \eqref{tau-ah1}. Hence, $u(t)$ must become constant, say $u=r>0$.  From  \eqref{tau-ah0}, we conclude $a_1=a_2=0$ and that $C_{a,\tau}$ is a circle centered at $N$.
\end{enumerate}
\end{proof}

Comparing with the Euclidean plane $\r^2$, besides straight-lines crossing the origin and circles centered at the origin, there are also other stationary curves with constant curvature. These curves are   circles crossing the origin. In such a case, $\alpha=-2$. However, in $\h^2$, the curves with constant curvature which cross the point $N$  satisfy   $\langle N,a\rangle_\epsilon=\tau$ and $a_3=-\tau$. Therefore, in each item of Proposition \ref{p-tau-h} there exist curves that cross the point $N$. However, except for geodesics, none of these curves is $\alpha$-stationary. 

\subsection{The maximum principle}

In this section, we will study those stationary curves that are closed. For this, we will compare the curvature of these curves with that of circles obtained in Theorem \ref{h-ccurv}. In the arguments, it is better to replace  the curvature by the comparison of the weighted curvature in the sense of manifolds with density (\cite[Sects. 3 and 8]{gr}). For a general density   $e^\phi$, where $\phi$ is a smooth function in $\h^2$, the {\it weighted curvature} $\kappa^\phi$ of $\gamma$ is defined by 
\begin{equation}\label{eq11}
\kappa^\phi=\kappa-\langle(\nabla\phi)\circ \gamma,\n\rangle_\epsilon,
\end{equation}
where $\nabla$ is the gradient operator on $\h^2$. We now particularize for $\phi(p)=\alpha \log(\d(p))$. If $X$ is a tangent vector of $\h^2$ at $p=\Psi(u,v)$, then 
$$\langle\nabla\phi,X\rangle_\epsilon=\alpha\frac{\langle \nabla \d,X\rangle_\epsilon}{\d}.$$
Now we use the parametrization $\Psi$ of $\h^2$. If $X=\frac{d}{dt}\Psi(u(t),v(t))$, then
$$\langle\nabla \d,X\rangle_\epsilon=\langle\nabla \d,\frac{d}{dt}\Psi(u(t),v(t))\rangle_\epsilon=\frac{d}{dt} \d\circ\Psi(u(t),v(t))=u'(t).$$
But  we also have 
$$u'(t)= \langle X,\Psi_u\rangle_\epsilon=\langle X,\xi\rangle_\epsilon.$$
Thus 
$$\langle\nabla\phi,X\rangle_\epsilon=\alpha\frac{\langle X,\xi\rangle_\epsilon}{\d}.$$
Once we have obtained the expression of $\nabla\phi$, the weighted curvature $\kappa^\phi$ in  \eqref{eq11}  becomes
\begin{equation}\label{kk}
\kappa^\phi=\kappa-\alpha\frac{\langle\n,\xi\rangle_\epsilon}{\d}.
\end{equation}

The maximum principle for the weighted curvature $\kappa^\phi$ is applied to obtain the following result (\cite{gr} and also in \cite[Ch. 3]{amr}).

\begin{proposition}[maximum principle]
 Let $\gamma_1$ and $\gamma_2$ be two curves in $\l^3$ tangent at $s_0$ such that $\n_1(s_0)=\n_2(s_0)$. If $\gamma_1\geq\gamma_2$ around $s_0$ with respect to the orientation $\n_i(s_0)$, then $\kappa_1^\phi(s_0)\geq \kappa_2^\phi(s_0)$. If, in addition,  $\kappa_1^\phi$ and $\kappa_2^\phi $ are constant with $\kappa_1^\phi=\kappa_2^\phi$, then $\gamma_1$ and $\gamma_2$ coincide in an open set around $s_0$. 
\end{proposition}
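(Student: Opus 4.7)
My plan is to reduce the weighted maximum principle to the classical maximum principle for graphs, plus ODE uniqueness. Set $p_0=\gamma_1(s_0)=\gamma_2(s_0)$ and introduce geodesic normal coordinates $(x,y)$ on $\h^2$ centred at $p_0$ so that the $x$-axis is tangent to both $\gamma_1$ and $\gamma_2$ at $p_0$ and the positive $y$-direction agrees with the common unit normal $\n_i(s_0)$. In such coordinates each $\gamma_i$ is locally the graph $y=f_i(x)$ of a smooth function satisfying $f_i(0)=0$ and $f_i'(0)=0$. The ordering hypothesis $\gamma_1\geq\gamma_2$ with respect to $\n_i(s_0)$ then reads $f_1(x)\geq f_2(x)$ on a neighbourhood of $0$.

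For the curvature comparison, since $(f_1-f_2)(0)=0$, $(f_1-f_2)'(0)=0$ and $f_1-f_2\geq 0$ on a neighbourhood of $0$, the function $f_1-f_2$ attains a local minimum at $0$, so $(f_1-f_2)''(0)\geq 0$. At the centre of geodesic normal coordinates the metric coefficients coincide with the Euclidean ones and the Christoffel symbols vanish, so the geodesic curvature of the graph $y=f_i(x)$ at the origin reduces to the Euclidean expression $\kappa_i(s_0)=f_i''(0)$; this yields $\kappa_1(s_0)\geq\kappa_2(s_0)$. By \eqref{kk}, the weighted correction $\alpha\langle\n,\xi\rangle_\epsilon/\d$ depends only on the common base point $p_0$ and on the common unit tangent and unit normal there, so it takes the same value for both curves, whence $\kappa_1^\phi(s_0)\geq\kappa_2^\phi(s_0)$.

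For the rigidity statement, assume in addition that $\kappa_1^\phi$ and $\kappa_2^\phi$ are both constant and equal to some $c\in\r$. Writing each curve as a graph $y=f_i(x)$ and substituting into \eqref{kk}, the equation $\kappa_i^\phi=c$ becomes a quasilinear second-order ODE
\begin{equation*}
f_i''(x)=G(x,f_i(x),f_i'(x)),
\end{equation*}
where $G$ is smooth in a neighbourhood of $(0,0,0)$; this smoothness uses that $\d>0$ there, which holds because $\gamma_i$ does not cross $N$ by the standing hypothesis. Both $f_1$ and $f_2$ solve this Cauchy problem with the same initial data $f_i(0)=0$, $f_i'(0)=0$, so by the Picard-Lindel\"of theorem $f_1\equiv f_2$ on some neighbourhood of $0$, i.e.\ $\gamma_1$ and $\gamma_2$ coincide on an open set around $s_0$.

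The main obstacle is bookkeeping: one must verify that in geodesic normal coordinates the geodesic curvature of a graph through the centre with horizontal tangent really reduces to $f''(0)$, and that once the weighted correction $\alpha\langle\n,\xi\rangle_\epsilon/\d$ is substituted the resulting equation is a genuinely quasilinear ODE with Lipschitz right-hand side away from $N$. Both facts are standard in the theory of manifolds with density, and the statement may equivalently be derived by specialising to curves the density-weighted comparison principle recalled in \cite[Ch. 3]{amr}.
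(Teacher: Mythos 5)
Your argument is correct and complete. Note that the paper itself does not prove this proposition at all: it is stated as a known comparison principle imported from the literature on manifolds with density (the citations to Grigor'yan and to Al\'{\i}as--Mastrolia--Rigoli), so there is no in-paper proof to compare against. What you supply is the standard self-contained argument: graphs in normal coordinates, second-derivative test at an interior minimum of $f_1-f_2$, the observation that at the centre of normal coordinates the geodesic curvature of a horizontal-tangent graph is $f''(0)$, the fact that the weighted correction $\alpha\langle\n,\xi\rangle_\epsilon/\d$ depends only on the shared first-order data at $p_0$, and ODE uniqueness for the rigidity clause. All of these steps are sound; the one point worth making explicit is that in the rigidity step the unit normal along each graph must be taken as the continuous extension of the common $\n_i(s_0)$ (the upward normal), since $\kappa^\phi$ changes sign with the orientation and the ODE $f_i''=G(x,f_i,f_i')$ is only the equation $\kappa_i^\phi=c$ for that choice; with the normals so fixed, Picard--Lindel\"of applies as you say because $G$ is smooth away from $N$. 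Your proof thus fills a gap the authors delegate to references, and it works verbatim in $\s^2$ away from $N$ and $-N$.
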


The maximum principle allows to characterize the class of closed  $\alpha$-stationary curves. 

\begin{theorem}\label{t1}
 The only $\alpha$-stationary closed curves in $\h^2$ are circles centered at $N$.  
\end{theorem}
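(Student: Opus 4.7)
The plan is to use the maximum principle (stated just before the theorem) to compare a closed $\alpha$-stationary curve $\gamma$ against the rotationally symmetric circles $C_r$ centered at $N$, at the points where the distance $\d\circ\gamma$ attains its extrema. Since $\gamma$ is closed it is compact and does not pass through $N$, so $\d\circ\gamma$ attains a maximum $r_0>0$ at some $p_0=\gamma(s_0)$ and a minimum $r_1>0$ at some $p_1=\gamma(s_1)$. At either extremum, $(\d\circ\gamma)'=\langle\gamma',\xi\rangle_\epsilon=0$, so $\gamma'$ is orthogonal to $\xi$, which means $\gamma$ is tangent at $p_0$ (resp.\ $p_1$) to the circle $C_{r_0}$ (resp.\ $C_{r_1}$) centered at $N$ through that point. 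Because $\gamma$ is an $\alpha$-stationary curve, its weighted curvature \eqref{kk} vanishes: $\kappa_\gamma^\phi\equiv 0$.

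First I would carry out the comparison at the maximum point $p_0$. Orient both curves by the normal $\n=-\xi(p_0)$ pointing toward $N$. With this choice, $\gamma$ lies in the disk bounded by $C_{r_0}$ near $p_0$, so $\gamma\geq C_{r_0}$ with respect to this common normal. Using the computation already performed in example (2) of the subsection on examples, $\kappa_{C_{r_0}}=\coth(r_0)$ and $\langle\n_{C_{r_0}},\xi\rangle_\epsilon=-1$, whence
$$\kappa_{C_{r_0}}^\phi=\coth(r_0)+\frac{\alpha}{r_0}.$$
The maximum principle then gives $0=\kappa_\gamma^\phi(s_0)\geq \kappa_{C_{r_0}}^\phi(s_0)$, i.e.\ $\alpha\leq -r_0\coth(r_0)$.

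Next I would repeat the argument at the minimum point $p_1$, this time orienting both curves by $\n=+\xi(p_1)$. Now $\gamma$ lies on the outer side of $C_{r_1}$, so again $\gamma\geq C_{r_1}$ in the sense of the maximum principle. Reversing the orientation flips the sign of the ordinary curvature to $-\coth(r_1)$ and of $\langle\n,\xi\rangle_\epsilon$ to $+1$, giving $\kappa_{C_{r_1}}^\phi=-\coth(r_1)-\alpha/r_1$, hence $\alpha\geq -r_1\coth(r_1)$. Since the function $g(r)=r\coth(r)$ is strictly increasing on $(0,\infty)$ (its derivative is $\coth(r)-r\operatorname{csch}^2(r)=\frac{\sinh(2r)-2r}{2\sinh^2 r}>0$), combining $-r_1\coth(r_1)\leq\alpha\leq -r_0\coth(r_0)$ forces $r_1\geq r_0$, and since $r_1$ is a minimum and $r_0$ a maximum we must have $r_0=r_1$. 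Thus $\d\circ\gamma$ is constant, so $\gamma$ is contained in a circle centered at $N$; as $\gamma$ is closed and one-dimensional, it must in fact equal that circle. Alternatively, equality in the maximum principle at $p_0$ then forces $\alpha=-r_0\coth(r_0)$ and the strong part of the maximum principle gives $\gamma=C_{r_0}$ on a neighborhood, extended globally by the ODE \eqref{el-h22}.

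The main obstacle is bookkeeping: choosing compatible orientations on the two sides so that the geometric inequality $\gamma\geq C_{r_i}$ holds with respect to the \emph{same} unit normal, and correctly tracking the sign changes of $\kappa$ and $\langle\n,\xi\rangle_\epsilon$ under orientation reversal so that the weighted curvature $\kappa^\phi$ is computed consistently. Everything else is an application of the stationary equation $\kappa_\gamma^\phi=0$ together with monotonicity of $r\coth(r)$.
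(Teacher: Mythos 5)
Your proof is correct and takes essentially the same approach as the paper: the paper locates the same two tangency circles by sweeping the family $C_r$ to the first contact from outside and from inside (which occurs exactly at the maximum and minimum of $\d\circ\gamma$), computes $\kappa^\phi_{C_r}$ with the same orientation conventions to obtain $-r_1\coth(r_1)\le \alpha\le -r_0\coth(r_0)$, and concludes from the monotonicity of $r\mapsto -r\coth(r)$ that the two radii coincide.
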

\begin{proof} Let $\gamma$ be a closed $\alpha$-stationary curve. Let $C_{r}$ denote a circle centered at $N$ of radius $r>0$ and let $D_r\subset\r^2$ be the closed disk bounded by $C_r$ which contains $N$ in its interior. Let $r>0$  be sufficiently big so $\gamma(I)\subset D_{r}$. Let decrease $r$, with $r\searrow 0$, until the first intersection with $\gamma$. Suppose that this occurs for $r=r_2$. At the intersection point between $C_{r_2}$ and $\gamma$, consider   on $C_{r_2}$ the inward orientation, that is, the orientation pointing towards $D_{r_2}$. Orient $\gamma$ so that  it coincides with $C_{r_2}$ at the contact point. Thus we have $\gamma\geq C_{r_2}$. For the weighted curvature $\kappa^\phi$ corresponding to the value $\alpha$, the maximum principle yields   $\kappa^\phi_\gamma\geq  \kappa^\phi_{C_{r_2}}$. For  $\gamma$, we have  $\kappa^\phi_\gamma=0$ because $\gamma$ is an $\alpha$-stationary curve, independently from the orientation on $\gamma$. For $C_{r_2}$, and taking into account that the normal vector of $C_{r_2}$ is $-\xi$, we obtain 
$$\kappa^\phi_{C_{r_2}}=\coth(r_2)+ \frac{\alpha}{r_2}=\frac{\alpha+r_2\coth(r_2)}{r_2}.$$   Thus, by the maximum principle,
$$0\geq  \frac{\alpha+r_2\coth(r_2)}{r_2}.$$
This implies $\alpha\leq -r_2\coth(r_2)$. In other words, $\alpha\leq  \alpha_2:=-r_2\coth(r_2)$.

Analogously, we do a similar argument by taking circles $C_r$ with $r$  small. When $r$ is close to $0$, the curve $\gamma$ lies outside the domain $D_r$. We then increase the radius $r$ of $C_r$ until the first contact with $\gamma$ at $r=r_1$. Notice that $r_1\leq r_2$. At this point, we consider the outward orientation on $C_{r_1}$. Again, we have $\gamma\geq C_{r_1}$ around the contact point. By the maximum principle, it follows $0\geq \kappa^\phi_{C_{r_1}}$. Now we have
$$\kappa^\phi_{C_{r_1}}=-\coth(r_2)- \frac{\alpha}{r_2}=-\frac{\alpha+r_1\coth(r_1)}{r_1}$$
because the normal vector on $C_{r_1}$ coincides with $\xi$. The maximum principles yields
$$0\geq -\frac{\alpha+r_1\coth(r_1)}{r_1},$$
which gives $\alpha\geq -r_1\coth(r_1)$, or equivalently, $\alpha\geq\alpha_1:=r_1\coth(r_1)$. Definitively, we have proved  
$$-r_1\coth(r_1)\leq\alpha\leq-r_2\coth(r_2).$$ 
However, the function $x\mapsto -x\coth(x)$ is decreasing for $x>0$. Since $r_1\leq r_2$, we deduce $r_1=r_2$ and thus $\gamma=C_{r_1}$ as we want to prove.
\end{proof}

As a consequence of the proof, we deduce that  for some values of $\alpha$, any $\alpha$-stationary curve tends to infinity.

\begin{corollary} Let $\gamma$ be an $\alpha$-stationary curve properly immersed in $\h^2$. If $\alpha\geq -1$, then $\gamma(I)$ is not bounded.
\end{corollary}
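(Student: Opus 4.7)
The plan is to prove the contrapositive: assume the image $\gamma(I)$ is bounded and derive $\alpha<-1$, contradicting $\alpha\geq -1$. The decisive analytic input is the elementary observation that the function $g(r)=-r\coth(r)$ is strictly decreasing on $(0,\infty)$ with $\lim_{r\to 0^+}g(r)=-1$; equivalently $r\coth(r)>1$ for every $r>0$, which follows at once from $\sinh(2r)>2r$. Consequently $g(r)<-1$ throughout $(0,\infty)$, and this strict inequality is the only ingredient that separates the corollary from Theorem~\ref{t1}.

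Next I would verify that boundedness together with proper immersion forces $\gamma(I)$ to be compact. Since $\h^2$ is a complete Riemannian manifold, closed and bounded subsets are compact by Hopf--Rinow, so $\overline{\gamma(I)}$ is compact; properness then gives that $I=\gamma^{-1}(\overline{\gamma(I)})$ is compact, hence $\gamma(I)$ is compact. Because $\gamma$ avoids the pole $N$ by the standing assumption, compactness yields a uniform lower bound $\d\geq d_0>0$ along $\gamma$, and there exists $R>0$ with $\gamma(I)\subset\overline{D_R}$, where $D_r$ denotes the geodesic disk in $\h^2$ of radius $r$ centered at $N$.

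Finally I would reuse the outer half of the shrinking-circle argument from the proof of Theorem~\ref{t1}. Let $r_2$ be the smallest radius with $\gamma(I)\subset\overline{D_{r_2}}$; compactness ensures $r_2\geq d_0>0$, and $\gamma$ and $C_{r_2}$ share a contact point. Orienting $C_{r_2}$ inwardly and $\gamma$ so that the two normals agree at the contact point, $\gamma$ lies locally above $C_{r_2}$, and the maximum principle applied to the weighted curvature \eqref{kk} yields
\begin{equation*}
0=\kappa^\phi_\gamma\geq\kappa^\phi_{C_{r_2}}=\coth(r_2)+\frac{\alpha}{r_2},
\end{equation*}
so $\alpha\leq -r_2\coth(r_2)=g(r_2)<-1$, contradicting the hypothesis. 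The only point requiring any care is the strict positivity of $r_2$, which is precisely what properness together with the standing assumption $N\notin\gamma(I)$ guarantee, via the uniform lower bound on $\d$ along the compact set $\gamma(I)$.
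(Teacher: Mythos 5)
Your proof is correct and follows essentially the same route as the paper: shrink the circumscribing circles centered at $N$ to the first contact radius $r_2>0$, apply the maximum principle for the weighted curvature to get $\alpha\leq -r_2\coth(r_2)$, and conclude from $-r\coth(r)<-1$ for all $r>0$. The extra details you supply (Hopf--Rinow to get compactness of $\gamma(I)$, the monotonicity of $-r\coth(r)$) are just a more careful write-up of the same argument.
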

\begin{proof}
If $\gamma(I)$ is bounded, then there is $r>0$ such that $\gamma(I)$ is contained in the domain $D_r$ bounded by a circle centered at $N$ of radius $r$. Since $\gamma$ is properly immersed, by letting $r\nearrow 0$, we arrive until the first radius $r_2$ such that $C_{r_2}$ touches $\gamma$ at some point. The maximum principle implies $\alpha\leq -r_2\coth(r_2)$. Since $-r_2\coth(r_2)<-1$, we get a contradiction. 
\end{proof} 

\subsection{Parametrizations of stationary curves}

In this subsection, we provide the explicit equations of $\alpha$-stationary curves in $\h^2$. 

\begin{theorem}\label{t29}
For the $\alpha$-stationary curves $\gamma(t)=\Psi(u(t),v(t))$ in $\h^2$ which are not of constant curvature we have 
\begin{equation} \label{classif-h}
\begin{aligned}
t(u)&=\pm\int^u\frac{s^\alpha\sinh(s)}{\sqrt{s^{2\alpha}\sinh^2(s)-c^2}}ds \\
v(u)&= \pm\int^u\frac{c}{\sinh(s)\sqrt{s^{2\alpha}\sinh^2(s)-c^2}}ds,
\end{aligned}
\end{equation}
where $c>0$ is a real constant.
\end{theorem}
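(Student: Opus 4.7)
The strategy is to exploit the fact that the integrand $J(u,u',v')=u^\alpha\sqrt{u'^2+\sinh^2(u)v'^2}$ of the energy \eqref{en-h} does not depend on the variable $v$. Consequently, the second Euler--Lagrange equation in \eqref{el-h0} reduces to $\frac{d}{dt}(\partial J/\partial v')=0$, and integrating yields a first integral (Noether's conservation law associated with rotational invariance about the $z$-axis, consistent with Remark \ref{h-rem}). First I would compute this conserved quantity explicitly:
\begin{equation*}
\frac{\partial J}{\partial v'}=\frac{u^\alpha\sinh^2(u)\,v'}{\sqrt{u'^2+\sinh^2(u)v'^2}}=c,
\end{equation*}
for some constant $c\in\r$. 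The case $c=0$ corresponds to $v'\equiv 0$, i.e.\ a geodesic through $N$, which is of constant curvature and hence excluded by hypothesis; thus I may assume $c\neq 0$.

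Next I would reparametrize $\gamma$ by arc-length, so that $|\gamma'|_\epsilon=1$, giving the two equations
\begin{equation*}
u^\alpha\sinh^2(u)\,v'=c,\qquad u'^2+\sinh^2(u)\,v'^2=1.
\end{equation*}
Substituting the first into the second eliminates $v'$ and yields an ODE for $u$ alone:
\begin{equation*}
u'^2=1-\frac{c^2}{u^{2\alpha}\sinh^2(u)}=\frac{u^{2\alpha}\sinh^2(u)-c^2}{u^{2\alpha}\sinh^2(u)}.
\end{equation*}
Taking square roots, I obtain $u'=\pm\sqrt{u^{2\alpha}\sinh^2(u)-c^2}/(u^\alpha\sinh(u))$, and since $\gamma$ is not of constant curvature, $u$ cannot be constant, so the expression under the square root is positive on the relevant interval and $u$ can be used as a local parameter.

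The final step is to invert. From $dt/du=1/u'$ I immediately recover the first formula of \eqref{classif-h}. Dividing $v'$ by $u'$ gives
\begin{equation*}
\frac{dv}{du}=\frac{v'}{u'}=\pm\frac{c}{\sinh(u)\sqrt{u^{2\alpha}\sinh^2(u)-c^2}},
\end{equation*}
which produces the second formula by integration. The constant $c$ may be taken positive by adjusting the choice of sign and, if necessary, applying a reflection in a plane through the $z$-axis (Remark \ref{h-rem}). The main technical obstacle is simply keeping track of the degeneracies: one must verify that the excluded cases $c=0$ and $u\equiv\mathrm{const}$ are precisely the two constant-curvature families identified in Theorem \ref{h-ccurv}, so the parametrization \eqref{classif-h} genuinely captures all remaining $\alpha$-stationary curves.
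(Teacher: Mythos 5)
Your proof is correct, but it follows a genuinely different route from the paper's. The paper starts from the curvature characterization \eqref{el-h2}: it introduces a turning angle $\sigma$ via $u'=\cos\sigma$, $\sinh(u)v'=\sin\sigma$, computes $\kappa=\sigma'+\coth(u)\sin\sigma$, obtains the separable ODE $\frac{d\sigma}{du}=-\tan\sigma\left(\frac{\alpha}{u}+\coth(u)\right)$, and integrates it to reach the first integral $u^\alpha\sinh(u)\sin\sigma=c$. You instead observe that $v$ is a cyclic coordinate of the Lagrangian $J$ and read off the Noether conserved quantity $\partial J/\partial v'=u^\alpha\sinh^2(u)v'/|\gamma'|_\epsilon$ directly; in arc-length this is exactly the paper's $u^\alpha\sinh(u)\sin\sigma$, so the two arguments converge to the same conservation law and the rest of the derivation is identical. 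Your route is shorter and bypasses the curvature computation entirely; it also makes transparent why the first integral exists (rotational invariance about the $z$-axis, as in Remark \ref{h-rem}). The paper's route, while longer, produces the angle function $\sigma$ and the second-order ODE for $u$ that are reused in the remark following the theorem. Two small points you should make explicit: (i) the conservation law is only the $v$-component of the Euler--Lagrange system, but since $J$ is homogeneous of degree one in $(u',v')$ one has $u'E_u+v'E_v\equiv 0$, so for a curve with $u$ non-constant the single law does encode full stationarity (for the direction actually claimed in the theorem this is not even needed, as stationarity trivially implies the conservation law); (ii) inverting $t\mapsto u$ requires $u'\neq 0$, so the formulas \eqref{classif-h} are local, valid away from the turning points of $u$ --- the same implicit restriction as in the paper's division by $\cos\sigma$.
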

\begin{proof}
Since $\gamma$ is not of constant curvature, both $u$ and $v$ are non-constant functions. We parametrize the curve $\gamma$ by its arc-length parameter $\sigma=\sigma(t)$ such that the functions in the appearing in the length element of $\gamma$ satisfy
\begin{equation}
u'=\cos (\sigma), \quad \sinh(u)v'= \sin(\sigma). \label{uvs-h}
\end{equation}
Taking derivative, we obtain
$$
u''=-\sigma'\sin (\sigma), \quad v''=\cos (\sigma)\frac{\sigma'\sinh(u)-\sin(\sigma)\cosh(u)}{\sinh^2(u)}.
$$
By substituting these in the expression of $\kappa$, we arrive at
$$
\kappa=\sigma'+\coth(u)\sin(\sigma).
$$
From \eqref{el-h2}, it follows
$$
\sigma'+\coth(u)\sin(\sigma)=-\alpha \frac{\sin (\sigma)}{u}.
$$
By dividing $\cos(\sigma)$, we obtain
$$
\frac{d\sigma}{du}=-\tan(\sigma)(\frac{\alpha}{u}+\coth(u)).
$$
Integrating,
$$
u^\alpha\sinh(u)\sin(\sigma)=c, \quad c\in \r,c> 0.
$$
This implies
$$
\sin(\sigma)=\frac{c}{u^\alpha\sinh(u)}, \quad \cos(\sigma)=\pm\frac{\sqrt{u^{2\alpha}\sinh^2(u)-c^2}}{u^\alpha\sinh(u)} .
$$
Considering \eqref{uvs-h}, we obtain 
$$
u'=\pm\frac{\sqrt{u^{2\alpha}\sinh^2(u)-c^2}}{u^\alpha\sinh(u)}
$$
and
$$
\frac{dv}{du}=\pm\frac{c}{\sinh(u)\sqrt{u^{2\alpha}\sinh^2(u)-c^2}}.
$$
Integrating, we may conclude \eqref{classif-h}. This completes the proof.
\end{proof}

Since the radicands appear in the integrands of \eqref{classif-h}, the variable $u$ may or not range over the entire of $(0,\infty)$. More explicitly, we require $u^{2\alpha}\sinh^2(u)-c^2>0$ or
$$
u^\alpha\sinh(u)>c, \quad u>0,c>0.
$$
Set 
$$f(u)=u^\alpha\sinh(u),\quad u\in I.$$
 Due to $u>0$, the behaviour of $f$ as $u\searrow 0$ depends on the value of $\alpha$ and hence we separate three cases:
\begin{enumerate}
\item Case $\alpha=-1$. Then, it follows that $f(u)$ is increasing and $\lim_{u\to 0}f(u)=1$. Thus, the domain of $u$ is $(0,\infty)$ when $0<c\leq1$. Otherwise, $c>1$, there is a positive constant $u_0$ such that $f(u_0)=c$, implying that the domain of $u$ is $(u_0,\infty)$. Consequently, we have
$$
I=\begin{cases}
(0,\infty), & 0<c\leq 1,\\
(u_0,\infty), & c>1,
\end{cases}
$$
where $u_0$ is the unique solution of $f(u)=c$.
\item Case $\alpha>-1$. In this case, $f(u)$ is increasing and 
$\lim_{u\to 0}f(u)=0$. Then, $I=(u_0,\infty)$ with $f(u_0)=c$.
\item Case $\alpha<-1$. We have $\lim_{u\to 0}f(u)=\infty$. Let $u_0$ denote the critical point, $f'(u_0)=0$. Then, $u_0$ solves $\alpha \sinh(u)+u\cosh(u)=0$. We conclude that $f(u)$ decreases on $(0,u_0)$ and increases on $(u_0,\infty)$. Also,
$$
I=\begin{cases}
(0,\infty), & 0<c<f(u_0),\\
(u_0,\infty)\setminus \{u_0\}, & c=f(u_0), \\
(0,a)\cup (b,\infty), & c>f(u_0),
\end{cases}
$$
where $a<u_0<b$ are two solutions of $f(u)=c$.
\end{enumerate}

\begin{remark} From the proof of Theorem \ref{t29}, it is possible to express the stationary curve equation \eqref{el-h22} as an ODE where  it only appears the function $u=u(t)$. Indeed, we have  
$$v'(t)=\frac{\sin(\sigma)}{\sinh(u)}=\frac{c}{u^\alpha\sinh(u)^2}.$$
From this identity, we can obtain $v''(t)$ and replace $v'$ and $v''$ in \eqref{k1} and \eqref{el-h22}, obtaining 
$$ u''=\frac{c^2}{u^{2 \alpha +1}\sinh(u)^2}(\alpha  -  u \coth (u) ).$$

\end{remark}
In Figure \ref{fig1}, and for different values of $\alpha$, we show some examples of $\alpha$-stationary curves by solving numerically Equation \eqref{el-h22}. We have adopted the Poincar\'e model of $\h^2$  by the symmetry of the space from the origin. The origin corresponds to the point $N=(0,0,1)$ of $\l^3$.  

\begin{figure}[h]
\begin{center}
\includegraphics[width=.3\textwidth]{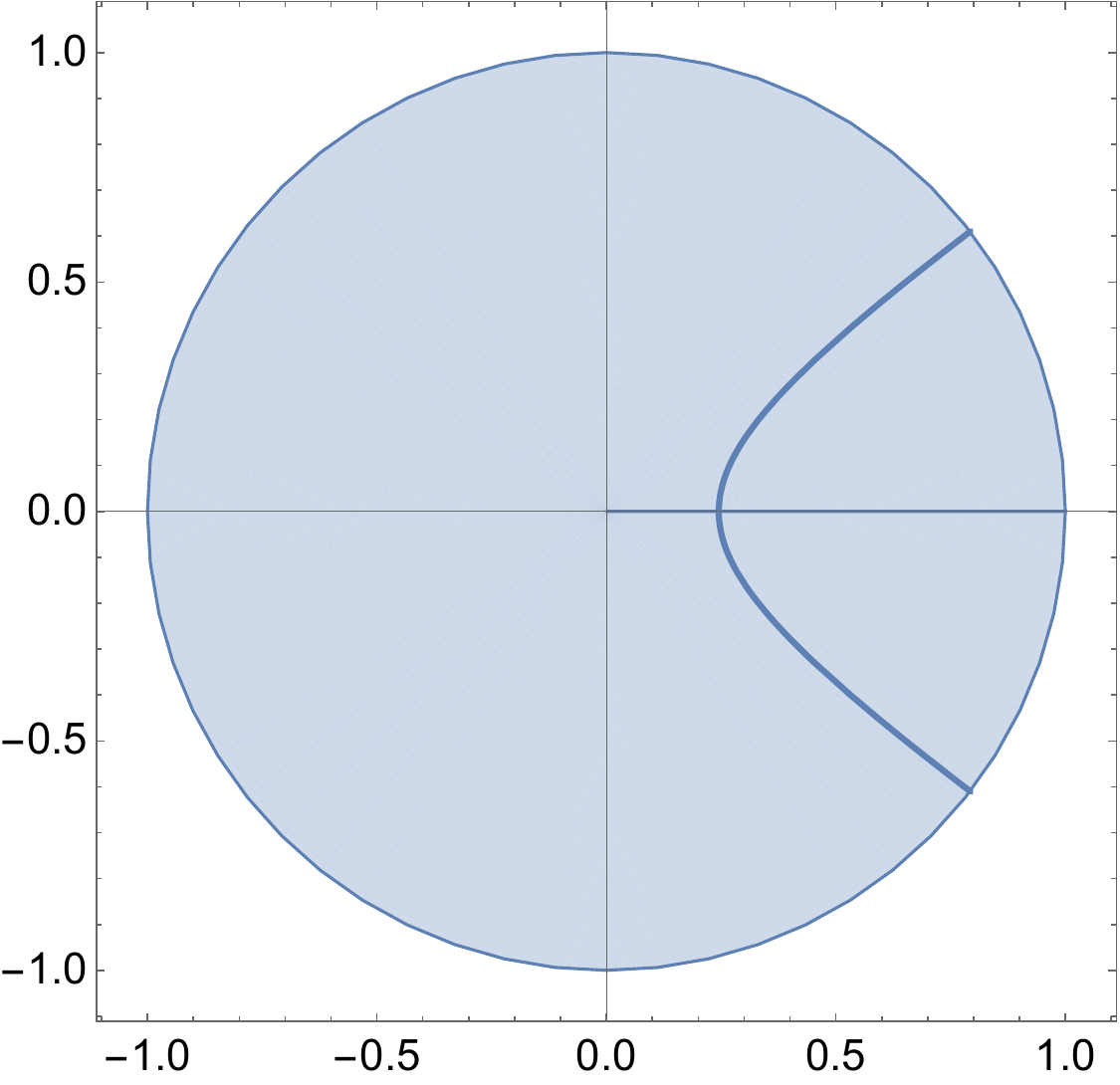} 
\includegraphics[width=.3\textwidth]{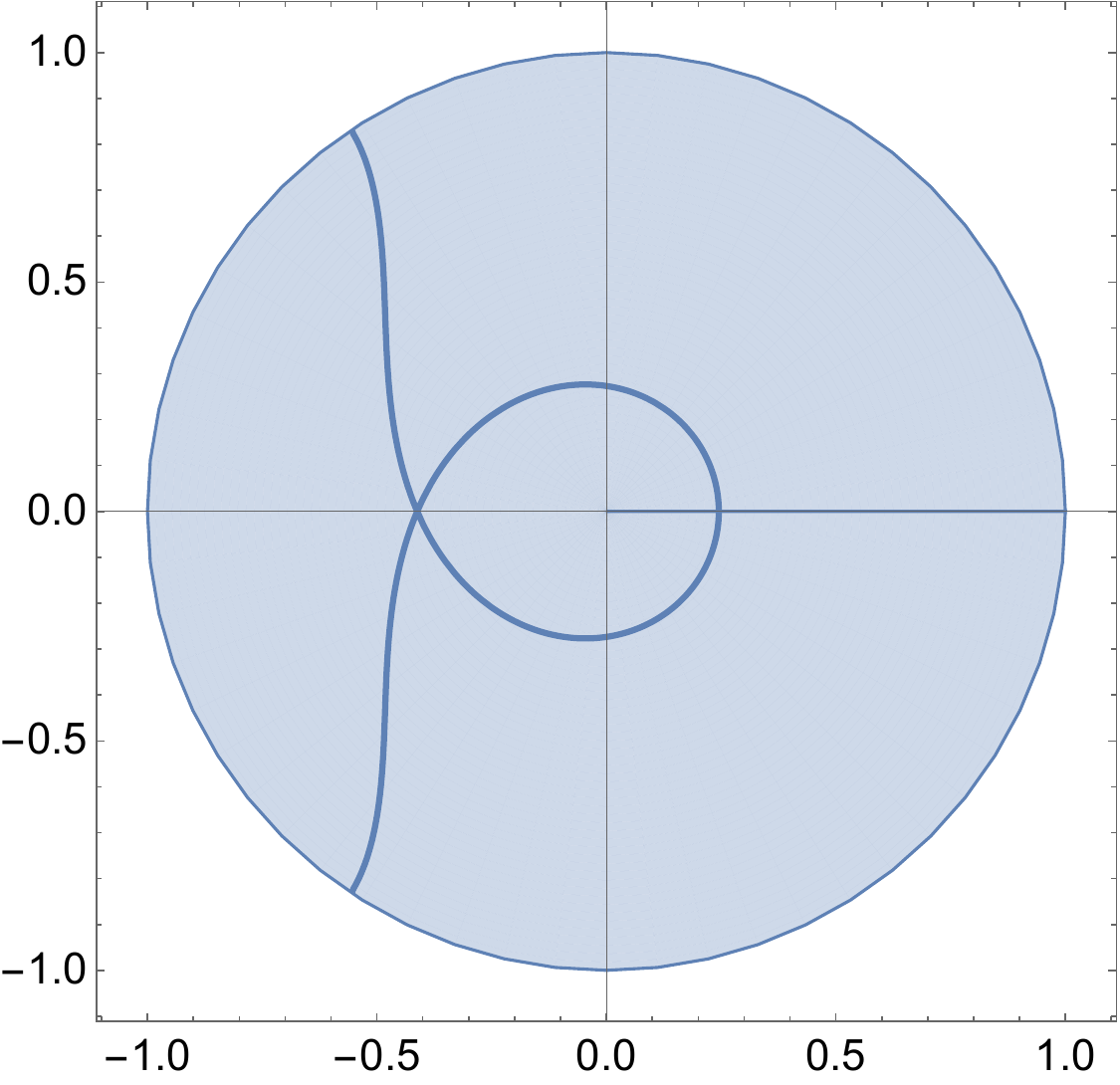}
\includegraphics[width=.33\textwidth]{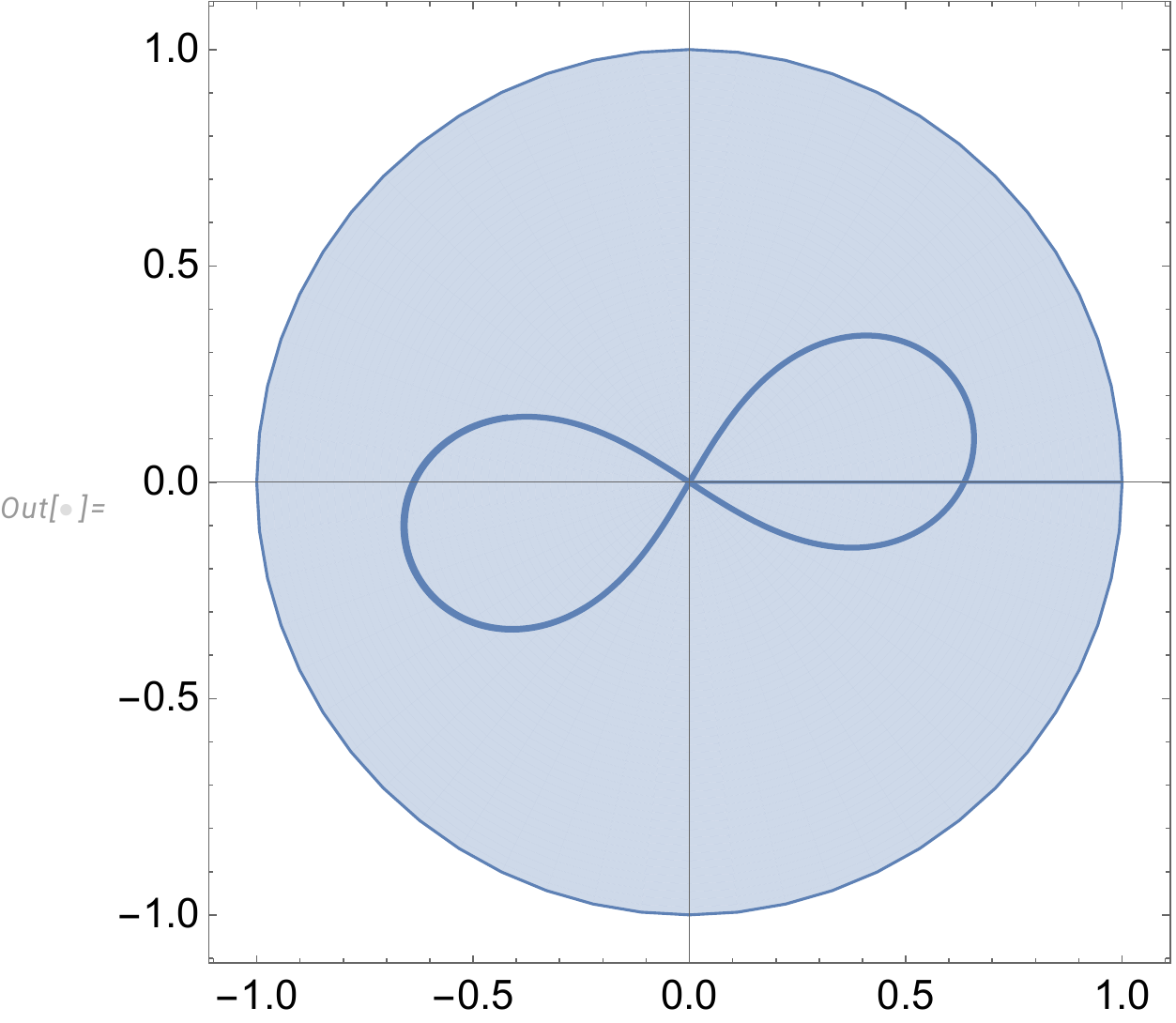}\end{center}
\caption{In the Poincar\'e model of $\h^2$, $\alpha$-stationary curves for $\alpha=1$ (left), $\alpha=-1$ (middle) and $\alpha=-3$ (right).}
\label{fig1}
\end{figure}

\subsection{Energy minimization problem in $\h^2$}

We finish this section coming back to the initial problem of finding minimizers of the energy $E_\alpha$. More clearly, given two points $p_1,p_2\in\h^2$, we find the curves $\gamma$ which join both points and globally minimize.  We will study the particular case where the two points are collinear with $N$, that is, $p_1$, $p_2$ and $N$ lie on the same geodesic. In such a case, it is expectable that this geodesic is the minimizer of $E_\alpha$. 

\begin{theorem}\label{th1}
 Let $p_1,p_2\in\h^2$ be two points lying on the same geodesic with $N$.
\begin{enumerate}
\item If $p_1$ and $p_2$ lie on the same ray starting at $N$, then the piece of this ray joining the two points is the minimizer of $E_\alpha$ for all $\alpha$.
\item Suppose $\alpha>0$. If $p_1$, $p_2$ and $N$ lie on the same geodesic and $N$ is in the middle of $p_1$ and $p_2$, then the piece of the geodesic joining the two points is the minimizer of $E_\alpha$.
\end{enumerate}
\end{theorem}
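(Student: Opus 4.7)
The strategy is to work in the coordinates $\Psi(u,v)$, in which the distance to $N$ reduces to the first coordinate $u$, so that the energy \eqref{en-h} becomes amenable to one-dimensional comparisons.

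For case (1), write $p_i=\Psi(u_i,v_0)$ and, after relabelling, suppose $u_1<u_2$. The ray segment $\gamma_0(u)=\Psi(u,v_0)$, $u\in[u_1,u_2]$, has energy $E_\alpha[\gamma_0]=\int_{u_1}^{u_2}r^\alpha\,dr$. For any competitor $\gamma(t)=\Psi(u(t),v(t))$, $t\in[a,b]$, joining $p_1$ to $p_2$, the elementary estimate $\sqrt{u'^2+\sinh^2(u)\,v'^2}\geq|u'|$ combined with the triangle inequality for integrals gives
$$
E_\alpha[\gamma]\;\geq\;\int_a^b u^\alpha|u'|\,dt \;\geq\;\Bigl|\int_a^b u^\alpha u'\,dt\Bigr|\;=\;|F(u_2)-F(u_1)|\;=\;\int_{u_1}^{u_2} r^\alpha\,dr,
$$
where $F$ is a primitive of $r\mapsto r^\alpha$ (the logarithm when $\alpha=-1$); the middle equality is the fundamental theorem of calculus, valid for every $\alpha$ since $\gamma$ stays away from $N$. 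Equality forces $v'\equiv 0$ and $u$ monotone, yielding $\gamma=\gamma_0$ up to reparametrisation.

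For case (2), parametrise the competitor $\gamma$ by arc-length $s\in[0,L]$ and set $u(s):=\d(\gamma(s))$. Two geometric facts are crucial. First, the reverse triangle inequality in $\h^2$ shows that $u$ is $1$-Lipschitz,
$$
|u(s)-u(s')|\;\leq\;\operatorname{dist}_{\h^2}(\gamma(s),\gamma(s'))\;\leq\;|s-s'|.
$$
Second, since $p_1,N,p_2$ are collinear with $N$ in the middle, the broken geodesic $p_1\to N\to p_2$ realises the $\h^2$-distance between the endpoints, so $L\geq u_1+u_2$. Combining these with $u(0)=u_1$ and $u(L)=u_2$ yields $u(s)\geq w(s):=\max\bigl\{u_1-s,\; u_2-(L-s),\; 0\bigr\}$. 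At this point the hypothesis $\alpha>0$ enters: $r\mapsto r^\alpha$ is increasing on $[0,\infty)$, so $u^\alpha\geq w^\alpha$. Because $L\geq u_1+u_2$, the two linear ``tents'' in $w$ have disjoint supports $[0,u_1]$ and $[L-u_2,L]$, and hence
$$
E_\alpha[\gamma]\;\geq\;\int_0^L w^\alpha\,ds\;=\;\int_0^{u_1} r^\alpha\,dr+\int_0^{u_2} r^\alpha\,dr\;=\;\frac{u_1^{\alpha+1}+u_2^{\alpha+1}}{\alpha+1},
$$
which is precisely the energy of the broken geodesic through $N$.

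The principal obstacle is the indispensability of $\alpha>0$ in case (2): the $1$-Lipschitz argument only produces a \emph{lower} bound on $u(s)$, and such a bound transfers to the energy only when the weight $r\mapsto r^\alpha$ is monotonically increasing. For $\alpha<0$ the weight favours points far from $N$, so the broken geodesic through $N$ is in fact a poor candidate and the theorem's restriction is not an artefact of the proof. A minor technical point is the fundamental-theorem step in case (1) when $\alpha\leq -1$: since $\gamma$ is assumed not to meet $N$, the integrand $u^\alpha|u'|$ remains integrable on $[a,b]$ and the argument goes through verbatim after replacing $F$ by $\log u$ when $\alpha=-1$.
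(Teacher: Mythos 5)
Your proof is correct and follows essentially the same route as the paper: in (1) the pointwise bound $\sqrt{u'^2+\sinh^2(u)\,v'^2}\geq|u'|$ plus the fundamental theorem of calculus, and in (2) the $1$-Lipschitz property of the distance to $N$ along an arc-length parametrization, the length bound $L\geq u_1+u_2$, and the monotonicity of $r\mapsto r^\alpha$ for $\alpha>0$. Your tent-function $w$ is a slightly cleaner packaging of the paper's endpoint-by-endpoint comparison, but the underlying estimate is identical.
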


\begin{proof}
\begin{enumerate}
\item Suppose that $p_1$ and $p_2$ lie on the same ray starting at $N$. Without loss of generality, we assume that $p_1$ is closer to $N$ than $p_2$. Then there is $v_0\in\r$ such that we can parametrize the ray from $p_1$ to $p_2$ as 
$\beta(t)=\Psi(t,v_0)$ for $t\in[a_1,a_2]$ with $\beta(a_i)=p_i$, $i=1,2$. Then, we have
$$E_\alpha[\beta]=\int_{a_1}^{a_2}t^\alpha\, dt.$$
If $\gamma$ is any curve joining $p_1$ and $p_2$, then $\gamma(t)=\Psi(u(t),v(t))$ for $t\in [t_1,t_2]$ with $\gamma(t_i)=p_i$, $i=1,2$. Hence,
\begin{equation*}
\begin{split}
E_\alpha[\gamma]&=\int_{t_1}^{t_2}u(t)^\alpha\sqrt{u'(t)^2+(\sinh(u(t))v'(t))^2}\, dt\\
&\geq\int_{t_1}^{t_2}u(t)^\alpha u'(t) dt=E_\alpha[\beta].
\end{split}
\end{equation*}
\item Suppose now that $N$ is in the middle of $p_1$ and $p_2$. Then there are $v_0,a_1,a_2\in\r$   with $a_1<0<a_2$, such that the geodesic joining $p_1$ and $p_2$ can be parametrized by 
\begin{equation}\label{beta}
\beta(t)=\left\{\begin{array}{ll} \Psi(a_1-t,v_0)& t\in [ a_1,0]\\
\Psi(t,v_0+\pi)&t\in [0,a_2].
\end{array}\right.
\end{equation}
The computation of the energy of $\beta$ gives
$$E_\alpha[\beta]=\int_{a_1}^{0}(-t)^\alpha\, dt+\int_{0}^{a_2}t^\alpha\, dt.$$
On the other hand, let $\gamma=\gamma(t)$ be any curve joining $p_1$ with $p_2$,  where $t$ indicates the arc-length parameter of $\gamma$. Notice that $t$ is also the arc-length parameter of $\beta$ in \eqref{beta} because $|\Psi_u|_\epsilon=1$ as well as the distance between $\beta(t)$ and $N$. After a translation, we assume that the domain of $\gamma$ is $[a_1,b_2]$ with $a_2<b_2$, $\gamma(a_1)=p_1$ and $\gamma(b_2)=p_2$.   Notice that  the length of $\beta$ is $ a_2-a_1$ which it is less than of $\gamma$, i.e. $b_2-a_1$. 

Denote by $\d(t)$ the distance between  $\gamma(t)$ and $N$.    Since $t$ is the length parameter in both curves, then the distance between $\beta(t)$ and $N$ is less than $\d(t)$. Using $\alpha>0$, we obtain 
\begin{equation*}
\begin{split}
E_\alpha[\gamma]&>\int_{ a_1}^0\d(t)^\alpha\, dt+\int_{b_2-a_2}^{b_2}\d(t)^\alpha\, dt\geq 
 \int_{a_1}^{0}( -t)^\alpha\, dt+\int_{b_2-a_2}^{b_2}t^\alpha\, dt\\
&\geq\int_{a_1}^{ 0}( -t)^\alpha\, dt+\int_{0}^{a_2}t^\alpha\, dt= E_\alpha[\beta].
 \end{split}
\end{equation*}

\end{enumerate}
\end{proof}

\section{Sphere}

As similar to the previous section, we find, in $\s^2$, the Euler–Lagrange equation for \eqref{eq1},   examples of stationary curves, applications of the maximum principle, parametrizations of these curves, and minimization of energy. Since the computations are similar, we omit the details.

\subsection{The Euler-Lagrange equation}

We consider the   parametrization for $\s^2$ as subset of $\r^3$ given by 
$$\Psi(u,v)=(\sin (u)\cos (v),\sin (u)\sin (v),\cos (u)),\quad u,v\in\r.$$
Consider the   north pole $N=(0,0,1)\in\s^2$ as the reference point for the distance function. Let $\gamma\colon [a,b]\to\s^2$ be a curve, $\gamma=\gamma(t)$, given by 
$$\gamma(t)=\Psi(u(t),v(t))=(\sin u(t)\cos v(t),\sin u(t)\sin v(t),\cos u(t)),$$
where $u=u(t)$, $v=v(t)$ are smooth functions on $[a,b]$. Then, the distance from $\gamma(t)$ to $N$ is $\d(t)=u(t)$. Since the line element is given by
$$|\gamma'|=\sqrt{u'^2+\sin(u)^2 v'^2},$$
the energy \eqref{eq1} becomes  
\begin{equation} \label{en-s}
E_\alpha[\gamma]=\int_a^b u^\alpha\sqrt{u'^2+\sin^2 (u) v'^2}\, dt.
\end{equation}
The normal is defined by $\n(t)=\frac{\gamma'(t)\times\gamma(t)}{|\gamma'(t)|}$, obtaining
$$
\n=\frac{\gamma'\times\gamma}{|\gamma'|}
=\frac{1}{\sqrt{u'^2+\sin^2(u)v'^2}}
\begin{pmatrix}
u'\sin v+v'\sin (u)\cos (u)\cos (v)\\
-u'\cos (v)+v'\sin (u)\cos (u)\sin (v)\\
-v'\sin^2(u)
\end{pmatrix}.
$$
The   curvature $\kappa$ of $\gamma(t)$ is 
\begin{equation}\label{k2}
\begin{split}
\kappa&=\frac{\langle\gamma'',\n\rangle}{|\gamma'|^3}=\frac{\mbox{det}(\gamma'',\gamma',\gamma) }{|\gamma'|^3}\\
&=-\frac{1}{(u'^2+\sin ^2(u) v'^2)^{3/2}}\left(v'\cos (u)(2u'^2+v'^2\sin^2(u))+\sin (u)(u'v''-u''v')\right).
\end{split}
\end{equation}

The Euler-Lagrange equations for \eqref{en-s} are
\begin{equation*} 
\begin{split}
0&= u^{\alpha-1}v'\sin (u) \Big( \alpha v'\sin (u)(u'^2+\sin^2  (u)v'^2)\\
&+u(\sin (u)(u'v''-u''v') +v'\cos (u)(2u'^2+v'^2\sin(u)^2))\Big),\\
0&= u^{\alpha-1}u'\sin (u) \Big( \alpha v'\sin (u)(u'^2+\sin^2  (u)v'^2)\\
&+u(\sin (u)(u'v''-u''v') +v'\cos (u)(2u'^2+v'^2\sin(u)^2))\Big).
\end{split}
\end{equation*}
By regularity of $\gamma$, the functions $u'$ and $v'$ cannot vanish simultaneously. Thus the parenthesis in the above two equations is $0$. By using the expression of $\kappa$ given in \eqref{k2}, we get 
$$
\kappa=\alpha \frac{v'\sin (u)}{u|\gamma'|}.
$$
As in Section \ref{sec-h}, we may conclude the following characterization of stationary curves in $\s^2$.

\begin{proposition}\label{s-p1}
The $\alpha$-stationary curves $\gamma$ in $\s^2$ are characterized in terms of their curvature $\kappa$ by 
\begin{equation} \label{el-s1}
\kappa=\alpha\frac{\langle \n,\xi\rangle}{\d }.
\end{equation}
Here, $\n$ denotes the unit normal vector  of $\gamma$, $\d$ is the distance from $N$, and $\xi$ is the unitary tangent vector  to the minimizing geodesic joining $\gamma(t)$ and $N$. 
\end{proposition}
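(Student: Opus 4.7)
The plan is to mirror the argument carried out in the hyperbolic case (Proposition \ref{h-p1}) and merely reinterpret the already-derived identity
$$
\kappa=\alpha\,\frac{v'\sin(u)}{u\,|\gamma'|}
$$
in intrinsic geometric terms. To do this, I need to identify the vector field $\xi$ on $\s^2\setminus\{\pm N\}$, compute the inner product $\langle\n,\xi\rangle$ in the coordinates $\Psi(u,v)$, and observe that $\d(t)=u(t)$.

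First I would describe the rays from $N$ on $\s^2$: by the same argument as in $\h^2$, the minimizing geodesics emanating from $N$ are precisely the meridians, parametrized by $u\mapsto \Psi(u,v_0)$ with $u\in(0,\pi)$. A direct calculation gives $\Psi_u=(\cos u\cos v,\cos u\sin v,-\sin u)$, which has unit Euclidean norm. Hence $\Psi_u$ is the unit tangent to the meridian, and for any $p=\Psi(u,v)\in\s^2\setminus\{\pm N\}$ one has
$$
\xi(p)=\Psi_u(u,v)=(\cos u\cos v,\,\cos u\sin v,\,-\sin u).
$$

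Next I would compute $\langle\n,\xi\rangle$ using the formula for $\n$ already displayed in the subsection. A straightforward expansion shows that the terms involving $u'$ cancel (by $\cos u\cos v\sin v-\cos u\sin v\cos v=0$), while the remaining contributions combine through the identity $\cos^2 u+\sin^2 u=1$ to yield
$$
\langle\n,\xi\rangle=\frac{v'\sin(u)\cos^2(u)+v'\sin^3(u)}{|\gamma'|}=\frac{v'\sin(u)}{|\gamma'|}.
$$
Since the intrinsic distance from $\gamma(t)$ to $N$ along a meridian equals $u(t)$, we have $\d=u$. Substituting both identities into $\kappa=\alpha v'\sin(u)/(u|\gamma'|)$ gives exactly \eqref{el-s1}. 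Conversely, reversing the substitution shows that \eqref{el-s1} implies the Euler--Lagrange equation, completing the equivalence.

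There is essentially no obstacle here: the only computation of substance is the trigonometric simplification of $\langle\n,\xi\rangle$, and the cancellation mechanism is identical (up to replacing $\sinh,\cosh$ by $\sin,\cos$ and adjusting the sign coming from $\epsilon=+1$) to the one used to prove Proposition \ref{h-p1}. For this reason it is appropriate to present the proof briefly and refer to the hyperbolic calculation for the parallel details, exactly as the authors already announce by writing ``As in Section~\ref{sec-h}, we may conclude\ldots''.
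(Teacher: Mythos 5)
Your proposal is correct and follows exactly the route the paper intends: the paper derives $\kappa=\alpha v'\sin(u)/(u|\gamma'|)$ from the Euler--Lagrange equations and then writes ``As in Section \ref{sec-h}, we may conclude,'' omitting precisely the identification $\xi=\Psi_u$ and the computation $\langle\n,\xi\rangle=v'\sin(u)/|\gamma'|$ that you supply, and your trigonometric simplification checks out (the $u'$ terms cancel and $\cos^2u+\sin^2u=1$ gives the stated value, with the sign matching the spherical Euler--Lagrange identity).
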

   
It is necessary to point out that in $\s^2$, the notion of ray holds as in $\h^2$ in the sense that it is the geodesic from $N$ to a point of $\s^2$ with the extra condition that this geodesic is minimizing the length. Notice that given a point $p\in\s^2$ there are two arcs of geodesics joining $N$ with $p$, but only one (ray) is minimizing the length (except that $p=-N$, where all geodesics are minimizers of the length). Again, as in the cases in $\h^2$ and $\r^2$,  the characterization of $\alpha$-stationary curve given by \eqref{el-s1} has the same form as \eqref{el-h2} and \eqref{eq00}, respectively.  

It is also clear that  rotations about the $z$-axis and reflections about planes containing the $z$-axis preserve the solutions of \eqref{el-s1}.
 
\subsection{Examples of stationary curves}

The following are immediate examples of stationary curves:
\begin{enumerate}
\item Geodesics crossing $N$ are $\alpha$-stationary curves for all $\alpha$.
\item Circles centered at $N$. A circle of radius $r>0$ centered at $N$ is parametrized by 
$$\gamma(t)=(\sin (r) \cos (t),\sin (r) \sin (t),\cos (r)).$$
The inward normal is $\n(t)=(-\cos (r) \cos (t),-\cos (r) \sin (t),\sin (r))$. Hence, $\langle\n,\xi\rangle=-1$ and $\kappa=\cot(r)$, $r\in (0,\pi) $. Thus $\gamma$ is an $\alpha$-stationary curve for $\alpha = -r\cot(r)$.
\end{enumerate}

 It is worth pointing out that, for $r\in (0,\pi) $, we have $\alpha = -r\cot(r) \in(-1,\infty)$, while in $\h^2$ the value of $\alpha$ is always negative.

Again, the next objective is finding all stationary curves in $\s^2$ with constant curves. The description of the curves of $\s^2$ with constant curvature is the following.  

\begin{proposition} \label{p-tau-s} 
The curves in $\s^2$ with constant curvature are described as follow. Let 
$$C_{a,\tau}=\{p\in\s^2\colon\langle p,a\rangle=\tau\}.$$
The normal is
$$\n(p)=\lambda( a-\tau p),\quad\lambda=\frac{1}{\sqrt{1-\tau^2}},$$
and the curvature is $\kappa=\lambda\tau$. The types are the following:
\begin{enumerate}
\item Geodesics. Here $\tau=0$ and $\kappa=0$. They are great circles of $\s^2$.
\item Circles. Here $0<|\tau|<1$ and $\kappa=\frac{\tau}{\sqrt{1-\tau^2}}$.
\end{enumerate}
\end{proposition}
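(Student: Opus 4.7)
The plan is to mirror the treatment of Proposition \ref{p-tau-h}, using the Darboux frame along $\gamma$ to produce a first integral. Along a unit-speed curve $\gamma(s)$ in $\s^2$, the triple $\{\gamma,\gamma',\n\}$ forms an orthonormal frame of $\r^3$, and differentiating the orthogonality relations $\langle\gamma,\gamma\rangle=1$, $\langle\gamma,\gamma'\rangle=0$, $\langle\n,\gamma\rangle=\langle\n,\gamma'\rangle=0$ together with the definition $\gamma''+\gamma=\kappa\n$ yields the companion equation $\n'=-\kappa\gamma'$. These two identities form the analogue of the Frenet system for curves on $\s^2$ and are the only ingredients I will use.

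For the forward direction, given a curve with constant $\kappa$, I introduce the auxiliary vector $A=\kappa\gamma+\n$. The frame equations give $A'=\kappa\gamma'-\kappa\gamma'=0$, so $A$ is a constant vector of $\r^3$, with $|A|^2=\kappa^2+1$ and $\langle\gamma,A\rangle=\kappa$. Setting $a=A/\sqrt{1+\kappa^2}$ and $\tau=\kappa/\sqrt{1+\kappa^2}$, one has $|a|=1$, $|\tau|<1$, and $\langle\gamma,a\rangle=\tau$ identically along $\gamma$, so $\gamma\subset C_{a,\tau}$. A direct substitution using $\lambda=\sqrt{1+\kappa^2}=1/\sqrt{1-\tau^2}$ then confirms $\kappa=\lambda\tau$ and $\n=A-\kappa\gamma=\lambda(a-\tau p)$, matching the stated formulas.

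For the converse, fix a unit vector $a\in\r^3$ and $\tau\in\r$ with $|\tau|<1$; then $C_{a,\tau}$ is the smooth, connected intersection of $\s^2$ with a plane. At a point $p\in C_{a,\tau}$ the unit tangent $\gamma'$ is perpendicular to both $p$ (since $\gamma\subset\s^2$) and $a$ (since $\gamma$ lies in a plane with normal $a$), so the unit normal $\n$ in $T_p\s^2$ is parallel to the component of $a$ orthogonal to $p$, namely $a-\tau p$; the identity $|a-\tau p|^2=1-\tau^2$ yields $\n=\lambda(a-\tau p)$ after a consistent choice of orientation. Differentiating $\langle\gamma,a\rangle=\tau$ twice and using $\gamma''=-\gamma+\kappa\n$ gives $0=\langle\gamma'',a\rangle=-\tau+\kappa\langle\n,a\rangle$, and $\langle\n,a\rangle=\lambda(1-\tau^2)=1/\lambda$, forcing $\kappa=\lambda\tau$. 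The classification into geodesics ($\tau=0$, $\kappa=0$, yielding great circles) and proper circles ($0<|\tau|<1$) is then immediate. The main delicate point is matching the orientation of $\n$ in the converse with the sign convention fixed for the curves globally, which is the spherical analogue of the $\delta=\pm 1$ sign discussion in Proposition \ref{p-tau-h}.
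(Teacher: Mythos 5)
Your argument is correct and complete. Note that the paper itself offers no proof of Proposition \ref{p-tau-s} (nor of its hyperbolic counterpart, Proposition \ref{p-tau-h}); both are stated as known descriptions of the constant-curvature curves. Your blind proof is therefore a genuine addition rather than a variant of the paper's route. The two ingredients you isolate, $\gamma''+\gamma=\kappa\n$ and $\n'=-\kappa\gamma'$, are exactly the spherical Frenet equations, and the conserved vector $A=\kappa\gamma+\n$ gives the forward implication cleanly: $A'=0$, $\langle\gamma,A\rangle=\kappa$, $|A|^2=1+\kappa^2$, so $\gamma$ lies in $C_{a,\tau}$ with $a=A/\sqrt{1+\kappa^2}$, $\tau=\kappa/\sqrt{1+\kappa^2}$, and the identities $\lambda=\sqrt{1+\kappa^2}$, $\kappa=\lambda\tau$, $\n=\lambda(a-\tau p)$ all check out. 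The converse computation, projecting $a$ onto $T_p\s^2$ and differentiating $\langle\gamma,a\rangle=\tau$ twice, is also correct. Two small points worth making explicit: the statement tacitly assumes $|a|=1$ (otherwise $\lambda(a-\tau p)$ is not unit), which you correctly impose; and the orientation ambiguity you flag is harmless because $C_{a,\tau}=C_{-a,-\tau}$, so reversing the traversal direction flips $\n$, $\kappa$ and the pair $(a,\tau)$ simultaneously, keeping $\kappa=\lambda\tau$ and $\n=\lambda(a-\tau p)$ consistent.
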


The classification of stationary curves in $\s^2$ with constant curvature is the following. The proof is analogue to Theorem \ref{h-ccurv} and we omit it.

\begin{proposition} \label{s-ccurv}
The only   $\alpha$-stationary curves in $\s^2$ with constant curvature are: 
\begin{enumerate}
\item Geodesics passing through $N$. This holds for all value of $\alpha$.
\item Circles of radius $r$ centered at $N$ for $\alpha=-r\cot (r)$. 
\end{enumerate}
\end{proposition}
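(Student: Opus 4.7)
The plan is to adapt the proof of Theorem \ref{h-ccurv} to $\s^2$, using the classification of constant-curvature curves provided by Proposition \ref{p-tau-s}.

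First I would take a constant-curvature curve $C_{a,\tau}$ with $a=(a_1,a_2,a_3)$ and parametrize it by $p=\Psi(u(t),v(t))$. The defining condition $\langle p,a\rangle=\tau$ reads
$$\tau=a_1\sin(u)\cos(v)+a_2\sin(u)\sin(v)+a_3\cos(u),$$
from which one solves $a_1\cos(v)+a_2\sin(v)=(\tau-a_3\cos(u))/\sin(u)$. A direct computation with $\xi=\Psi_u=(\cos(u)\cos(v),\cos(u)\sin(v),-\sin(u))$ then yields
$$\langle a,\xi\rangle=\frac{\tau\cos(u)-a_3}{\sin(u)}.$$

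Next, from Proposition \ref{p-tau-s} I have $\n=\lambda(a-\tau p)$ and $\kappa=\lambda\tau$, and since $\xi$ is tangent to $\s^2$ at $p$, $\langle p,\xi\rangle=0$. Hence $\langle\n,\xi\rangle=\lambda\langle a,\xi\rangle$, and substituting into the characterization \eqref{el-s1} together with $\d=u$ and cancelling $\lambda$ gives the master equation
$$\tau\, u\sin(u)=\alpha\bigl(\tau\cos(u)-a_3\bigr),$$
which is the spherical analogue of \eqref{tau-ah1}.

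Then I would split into cases on $\tau$. If $\tau=0$, the equation forces $a_3=0$ (recall $\alpha\neq 0$); the constraint $\langle p,a\rangle=0$ with $a_1\cos(v)+a_2\sin(v)=0$ along the curve then forces $v(t)\equiv v_0$, so the curve is contained in a geodesic through $N$, which by Example (1) is stationary for every $\alpha$. If $\tau\neq 0$, I would argue that $u$ must be constant: viewing the master equation as a linear relation among the three functions $1,\cos(u),u\sin(u)$ of $u$ on an open interval, their linear independence (checked by differentiating twice to eliminate the $u\sin(u)$ and $\cos(u)$ dependence, yielding $\sin(u)\equiv 0$ and a contradiction) forces $\tau=0$. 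Hence $u\equiv r$ is constant, the constraint $\langle p,a\rangle=\tau$ reduces to $a_1\cos(v)+a_2\sin(v)=(\tau-a_3\cos(r))/\sin(r)$ along the curve, which as $v$ varies imposes $a_1=a_2=0$ and therefore $a=\pm e_3$. Thus $C_{a,\tau}$ is a circle centered at $N$, and substituting $u=r$, $a_3=\pm 1$, $\tau=\pm\cos(r)$ into the master equation yields precisely $\alpha=-r\cot(r)$.

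The main obstacle, though routine, is justifying the linear independence of $\{1,\cos(u),u\sin(u)\}$ on subintervals of $(0,\pi)$, and handling both signs $a=\pm e_3$ so that they are correctly identified with the same family of circles centered at $N$ (noting that a circle at distance $r$ from $N$ is at distance $\pi-r$ from $-N$). Everything else parallels the hyperbolic argument of Theorem \ref{h-ccurv} with $\sinh$ and $\cosh$ replaced by $\sin$ and $\cos$, which is why the authors omit the details.
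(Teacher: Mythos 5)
Your proposal is correct and follows exactly the route the paper intends: the paper omits this proof, stating only that it is analogous to Theorem \ref{h-ccurv}, and your argument is precisely that analogue carried out, with the master equation $\tau u\sin(u)=\alpha(\tau\cos(u)-a_3)$ playing the role of \eqref{tau-ah1} and the case split on $\tau$ matching the hyperbolic one. The computations (in particular $\langle a,\xi\rangle=(\tau\cos(u)-a_3)/\sin(u)$, the linear independence of $\{1,\cos(u),u\sin(u)\}$, and the check that both $a=\pm e_3$ yield $\alpha=-r\cot(r)$) are all correct.
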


\subsection{The maximum principle}

As in the hyperbolic plane, we study the stationary curves in $\s^2$ which also are  closed curves. A key difference with the hyperbolic plane is that the ambient space $\s^2$ is compact. Thus all curves are bounded and its distance from $N$ is less than $\pi$. For example, the value of the radius $r$ of the circles is not arbitrary because $r\in (0,\pi)$. The expression of the weighted curvature $\kappa^\phi$ coincides with \eqref{kk}, where the metric $\langle , \rangle_\epsilon$ is now replaced by $\langle , \rangle$.

 Denote by $\s^2_+=\s^2\cap\{z>0\}$ the upper hemisphere and by $\s^2_-=\s^2\cap\{z<0\}$ the lower hemisphere.
\begin{theorem} 
Let $\gamma$ be an $\alpha$-stationary closed curve in $\s^2$. 
\begin{enumerate}
\item If $\gamma$ is contained in the open hemisphere $\s^2_+$, then $\alpha<0$.
\item If $\gamma$ is contained in the open hemisphere $\s^2_-$, then $\alpha>0$.
\end{enumerate}
\end{theorem}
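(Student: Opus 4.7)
The plan is to mirror the proof of Theorem \ref{t1}, comparing $\gamma$ with geodesic circles centered at $N$ through the maximum principle applied to the weighted curvature $\kappa^\phi = \kappa - \alpha\langle\n,\xi\rangle/\d$. Recall that an $\alpha$-stationary curve has $\kappa^\phi \equiv 0$. For a circle $C_r$ of radius $r\in(0,\pi)$ centered at $N$ oriented with the normal $\n = -\xi$ pointing back along the meridian toward $N$, the computation in item (2) of the examples gives $\kappa = \cot(r)$ and $\langle\n,\xi\rangle = -1$, so
\[
\kappa^\phi_{C_r} = \cot(r) + \frac{\alpha}{r};
\]
reversing the orientation flips both $\kappa$ and $\langle\n,\xi\rangle$, yielding $\kappa^\phi_{C_r} = -\cot(r) - \alpha/r$.

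For case (1), compactness of $\gamma$ ensures that $r^* = \max_t \d(\gamma(t),N)$ is attained at some $p_0 \in \gamma$, and $r^* \in (0,\pi/2)$ because $\gamma \subset \s^2_+$. Since $\gamma$ lies inside the cap $\{\d \leq r^*\}$ and is tangent to $C_{r^*}$ at $p_0$, I would orient $C_{r^*}$ with the inward normal $-\xi$ and $\gamma$ to match, so that $\gamma \geq C_{r^*}$ locally around $p_0$. The maximum principle then yields
\[
0 = \kappa^\phi_\gamma(p_0) \geq \kappa^\phi_{C_{r^*}}(p_0) = \cot(r^*) + \frac{\alpha}{r^*},
\]
hence $\alpha \leq -r^*\cot(r^*)$. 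Since $\cot(r^*) > 0$ on $(0,\pi/2)$, this forces $\alpha < 0$.

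For case (2), $\gamma \subset \s^2_-$ gives $\d\circ\gamma \in (\pi/2,\pi)$, under the standing requirement that $\gamma$ avoid the singular point $-N$ of $\d$. Let $r^* = \min_t \d(\gamma(t),N) \in (\pi/2,\pi)$, attained at some $p_0 \in \gamma$. Now $\gamma$ lies in $\{\d \geq r^*\}$ and touches $C_{r^*}$ at $p_0$ from the exterior; I orient $C_{r^*}$ by the outward normal $+\xi$, and $\gamma$ to match, so that again $\gamma \geq C_{r^*}$ near $p_0$. The maximum principle gives
\[
0 = \kappa^\phi_\gamma(p_0) \geq \kappa^\phi_{C_{r^*}}(p_0) = -\cot(r^*) - \frac{\alpha}{r^*},
\]
hence $\alpha \geq -r^*\cot(r^*)$. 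Since $\cot(r^*) < 0$ on $(\pi/2,\pi)$, we conclude $\alpha > 0$.

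The principal delicate point is the bookkeeping of orientations: in each hemisphere one must choose the normal on $C_{r^*}$ so that $\gamma$ lies on the $\n$-side, which in case (2) flips both the sign of $\kappa$ and that of $\langle\n,\xi\rangle$ relative to case (1), and the two sign flips conspire to reverse the final inequality on $\alpha$. A subsidiary concern is justifying that $\gamma$ stays away from $-N$ in case (2); this follows from the hypothesis that $E_\alpha[\gamma]$ be differentiable, which by symmetry with the point $N$ requires avoiding the antipode as well.
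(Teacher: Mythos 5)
Your argument is correct and is essentially the paper's own proof: in both cases you compare $\gamma$ with the geodesic circle centered at $N$ realizing the extremal distance (maximum of $\d$ in $\s^2_+$, minimum in $\s^2_-$), choose the orientation so that $\gamma$ lies on the normal side, and apply the maximum principle for the weighted curvature $\kappa^\phi$, with the same sign bookkeeping and the same final inequalities $\alpha\le -r^*\cot(r^*)$ and $\alpha\ge -r^*\cot(r^*)$. The only cosmetic difference is that the paper phrases the contact circle as the first touching member of a shrinking (resp.\ growing) family of circles rather than directly as the extremum of the distance function, which is the same circle.
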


\begin{proof}
\begin{enumerate}
\item Suppose $\gamma(I)\subset\s^2_+$. Let  $C_{r}$ be a circle centered at $N$ of radius $r>0$. For $r$ close to $\pi/2$, the curve $\gamma$ is contained in the disc $D_r$ determined by $C_r$ and including $N$. If $ r\searrow 0$, let $r_1>0$ be the radius of the first circle that touches $\gamma$. With the orientation on $C_{r_1}$ pointing to $D_{r_1}$, we have $\gamma\geq C_{r_1}$ around the contact point. Since the curvature of $C_{r_1}$ is $r_1\cot(r_1)$ and the normal vector on $C_{r_1}$ is the opposite of $\xi$,  the weighted curvature $\kappa^\phi$ for the value $\alpha$ of $C_{r_1}$ is 
$$\kappa^\phi_{C_{r_1}}=\cot(r_1)+ \frac{\alpha}{r_1}=\frac{\alpha+r_1\cot(r_1)}{r_1}.$$
Then the maximum principle implies  
$$0=\kappa^\phi_\gamma\geq  \kappa^\phi_{C_{r_1}} =\frac{\alpha+r_1\cot(r_1)}{r_1}.$$
This gives $\alpha\leq -r_1\cot(r_1)$.   Since $r_1\in (0,\pi/2)$, then $\alpha=-r_1\cot(r_1)< 0$, proving the first item. 
\item Now suppose $\gamma(I)\subset\s^2_-$. Consider a circle $C_r$ centered at $N$ with radius $r>\pi/2$. For $r$ close to $\pi/2$, $\gamma$ lies outside the disc $D_r$ bounded by $\gamma$ and containing $N$. Let $r\nearrow\pi$ until the first contact with $\gamma$ for some radius $r_2$. On $C_{r_2}$ consider the orientation pointing outside $D_{r_2}$ and thus $\kappa=-\cot(r_2)$. The normal vector on $C_{r_2}$ coincides with $\xi$, implying 
$$\kappa^\phi_{C_{r_2}}=-\cot(r_2)- \frac{\alpha}{r_2}=\frac{ -r_2\cot(r_2)-\alpha}{r_2}.$$
Since $\gamma\geq C_{r_2}$  around the contact point, and because $\kappa^\phi_\gamma=0$, the maximum principle gives 
$$0\geq \frac{ -r_2\cot(r_2)-\alpha}{r_2}.$$
Thus $\alpha\geq -r_2\cot(r_2)>0$, because $r_2\in (\frac{\pi}{2},\pi)$. This proves the second item.
\end{enumerate}
 
\end{proof}

If an $\alpha$-stationary curve is far away from $N$ but intersects $\s^2_+$, then the value of $\alpha$ can be estimated.  

\begin{theorem} \label{t12}
Let $\gamma$ be an $\alpha$-stationary curve in $\s^2$ and suppose that $\gamma$ is properly immersed. If  the north pole $N$ is not an adherent point of $\gamma(I)$ and $\gamma(I)\cap\s^2_+\not=\emptyset$, then $\alpha> -1$.
\end{theorem}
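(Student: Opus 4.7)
The plan is to adapt the argument of the Corollary from the $\h^2$ section to $\s^2$: sweep circles centered at $N$, apply the maximum principle at the first contact circle, and conclude via the strict estimate $r\cot(r)<1$ that holds on $(0,\pi/2)$.

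First, since $\gamma$ is properly immersed in the compact space $\s^2$, the image $\gamma(I)$ is closed and hence compact. The continuous function $p\mapsto \d(p,N)$ therefore attains its minimum on $\gamma(I)$ at some point $p_0=\gamma(t_0)$; set $r_1:=\d(p_0,N)$. The hypothesis that $N$ is not adherent to $\gamma(I)$ forces $r_1>0$, while the hypothesis $\gamma(I)\cap\s^2_+\neq\emptyset$ furnishes a point of $\gamma(I)$ at distance strictly less than $\pi/2$ from $N$, so $r_1<\pi/2$.

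Next I would set up the maximum principle comparison at $p_0$. Let $C_{r_1}$ be the circle centered at $N$ of radius $r_1$, and $D_{r_1}$ the closed disk it bounds that contains $N$. By minimality of $r_1$ one has $\gamma(I)\subset\s^2\setminus\mathrm{int}(D_{r_1})$ and $p_0\in C_{r_1}\cap\gamma$, so $C_{r_1}$ is tangent to $\gamma$ at $p_0$ from the outside. Orient $C_{r_1}$ so that its normal at $p_0$ points outward from $D_{r_1}$, i.e.\ $\n=\xi$ at $p_0$, and orient $\gamma$ consistently. With this orientation one has $\gamma\geq C_{r_1}$ around $t_0$, and reversing the orientation in the circle example that precedes Proposition \ref{s-ccurv} gives $\kappa_{C_{r_1}}=-\cot(r_1)$ and $\langle\n,\xi\rangle=1$. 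Formula \eqref{kk}, applied in $\s^2$, then yields
$$\kappa^\phi_{C_{r_1}}=-\cot(r_1)-\frac{\alpha}{r_1}.$$
Since $\gamma$ is $\alpha$-stationary, $\kappa^\phi_\gamma\equiv 0$, and the maximum principle produces
$$\alpha\geq -r_1\cot(r_1).$$

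The final step is a short calculus estimate. The derivative of $h(r)=r\cot(r)$ equals $(\sin r\cos r-r)/\sin^2 r$, which is negative on $(0,\pi/2)$ because $\sin r\cos r\leq\sin r<r$ there. Combined with $\lim_{r\to 0^+}r\cot(r)=1$, this shows $r\cot(r)<1$, equivalently $-r\cot(r)>-1$, throughout $(0,\pi/2)$. Applied to $r_1\in(0,\pi/2)$, this upgrades the inequality above to the strict bound $\alpha>-1$. The main obstacle is organisational rather than technical: one must verify that both hypotheses are genuinely used. Proper immersion together with non-adherence of $N$ is what guarantees the minimum $r_1$ is attained and strictly positive, whereas $\gamma(I)\cap\s^2_+\neq\emptyset$ is precisely what confines $r_1$ to the interval $(0,\pi/2)$, the exact range where the strict estimate $-r\cot(r)>-1$ holds.
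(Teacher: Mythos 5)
Your proof is correct and follows essentially the same route as the paper: compare $\gamma$ at the first-contact circle $C_{r_1}$ centered at $N$ with outward orientation, apply the maximum principle to get $\alpha\geq -r_1\cot(r_1)$, and conclude from $r_1\in(0,\pi/2)$ that $\alpha>-1$. The only (harmless) difference is that you locate $r_1$ as the attained minimum of the distance function on the compact image rather than by sweeping circles outward from $N$, and your write-up actually fixes some sign slips present in the paper's displayed formulas.
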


\begin{proof}
Since $N\not\in\overline{\gamma(I)}$ and $\gamma$ is properly immersed in $\s^2$, the distance between $\gamma(I)$ and $N$ is positive. For $r>0$ sufficiently small, let $C_r$ be a circle of radius $r$ and centered at $N$ such that the domain $D_r$ bounded by $C_r$ and containing $N$ does not intersect $\gamma$. Letting $r\nearrow\frac{\pi}{2}$, and because $\gamma$ is properly immersed, we arrive until the first circle $C_{r_1}$ which touches $\gamma$, where the contact occurs tangentially.  Then $r_1\in (0,\frac{\pi}{2})$ because  $\gamma(I)\cap\s^2_+\not=\emptyset$. 

Consider on $C_{r_1}$ the outward orientation. Then the normal vector of  $C_{r_1}$ coincides with $\xi$ and the curvature of $C_{r_1}$ is $-\cot(r_1)$. Thus the computation of the weighted curvature $\kappa^\phi$ of $C_{r_1}$ for the value of $\alpha$ is 
$$\kappa^\phi_{C_{r_1}}=-\cot(r_1)-\frac{\alpha}{r_1}=-\frac{r_1\cot(r_1)}{r_1}.$$
On the other hand, $\kappa^\phi_\gamma=0$ because $\gamma$ is an $\alpha$-stationary curve, regardless the orientation on $\gamma$. Since $\gamma\geq C_{r_1}$ around the contact point, the maximum principle implies $\kappa^\phi_\gamma\geq \kappa^\phi_{C_{r_1}}$, that is
$$0\geq -\frac{r_1\cot(r_1)}{r_1}.$$
Therefore $\alpha\geq r_1\cot(r_1)$. Since $r_1\in (0,\frac{\pi}{2})$, then $-r_1\cot(r_1)\in (-1,0)$. This proves the result.
 
\end{proof}

  This result is analogous to that of Euclidean plane, where the same conclusion holds if $\alpha<-1$ \cite{dl1}. 
   
\subsection{Parametrizations of stationary curves}
Since the $\alpha$-stationary curves in $\s^2$ with constant curvature are already described in Proposition \ref{s-ccurv}, we now establish the parametrizations of those with non-constant curvature. The proof is similar as that of Theorem \ref{t29} and hence we omit it.
\begin{theorem} \label{t36}
For the $\alpha$-stationary curves $\gamma(t)=\Psi(u(t),v(t))$ in $\s^2$ which are not of constant curvature we have 
\begin{equation} \label{classif-s}
\begin{aligned}
t(u)&=\pm\int^u\frac{s^\alpha\sin(s)}{\sqrt{s^{2\alpha}\sin^2(s)-c^2}}ds \\
v(u)&= \pm\int^u\frac{c}{\sin(s)\sqrt{s^{2\alpha}\sin^2(s)-c^2}}ds,
\end{aligned}
\end{equation}
where $c>0$ is a real constant.
\end{theorem}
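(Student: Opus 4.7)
The plan is to mirror the proof of Theorem \ref{t29}, with the hyperbolic functions replaced by their trigonometric counterparts. Since $\gamma$ does not have constant curvature, Proposition \ref{s-ccurv} rules out $u$ or $v$ being constant, so we may reparametrize by arc-length. Writing the line element as $u'^2 + \sin^2(u) v'^2 = 1$, I would introduce an angle function $\sigma = \sigma(t)$ by setting
\begin{equation*}
u' = \cos\sigma, \qquad \sin(u)\, v' = \sin\sigma,
\end{equation*}
which is the spherical analogue of \eqref{uvs-h}.

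Next I would differentiate these relations to obtain $u'' = -\sigma'\sin\sigma$ and an analogous expression for $v''$, and substitute into the curvature formula \eqref{k2}. After using $\cos^2\sigma + \sin^2\sigma = 1$ and $u'^2 + v'^2\sin^2(u) = 1$, the terms should collapse to
\begin{equation*}
\kappa = -\sigma' - \cot(u)\sin\sigma.
\end{equation*}
Combining this with the $\alpha$-stationary equation \eqref{el-s1}, which under arc-length parametrization reads $\kappa = \alpha \sin\sigma / u$, yields the first-order ODE
\begin{equation*}
\sigma' = -\sin\sigma\left(\frac{\alpha}{u} + \cot u\right).
\end{equation*}
Dividing by $u' = \cos\sigma$ converts this to $\dfrac{d\sigma}{du} = -\tan\sigma\left(\dfrac{\alpha}{u} + \cot u\right)$, which is separable. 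Integrating $\cot\sigma\, d\sigma = -(\alpha/u + \cot u)\, du$ yields the first integral
\begin{equation*}
u^\alpha \sin(u)\sin\sigma = c, \qquad c > 0.
\end{equation*}

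From this conserved quantity I would solve $\sin\sigma = c/(u^\alpha \sin u)$ and $\cos\sigma = \pm\sqrt{u^{2\alpha}\sin^2 u - c^2}/(u^\alpha \sin u)$, then read off
\begin{equation*}
u' = \cos\sigma = \pm\frac{\sqrt{u^{2\alpha}\sin^2 u - c^2}}{u^\alpha \sin u}, \qquad \frac{dv}{du} = \frac{v'}{u'} = \pm\frac{c}{\sin u \sqrt{u^{2\alpha}\sin^2 u - c^2}}.
\end{equation*}
Inverting and integrating these give exactly the expressions for $t(u)$ and $v(u)$ in \eqref{classif-s}.

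The only real obstacle is the algebraic bookkeeping in deriving the clean form $\kappa = -\sigma' - \cot(u)\sin\sigma$ from the cumbersome formula \eqref{k2}; the cancellation between the $v'\cos(u)(2u'^2 + v'^2\sin^2 u)$ term and the Jacobian $\sin(u)(u'v'' - u''v')$ must produce precisely $\sigma'\sin u + \sin\sigma \cos u$ up to a factor of $\sin u$. Once this identity is secured, the remainder is a verbatim translation of the hyperbolic argument, replacing $\sinh, \cosh, \coth$ by $\sin, \cos, \cot$, and no genuinely new idea is required.
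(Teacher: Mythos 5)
Your proposal is correct and is precisely the argument the paper intends: the paper omits the proof of Theorem \ref{t36} as being analogous to that of Theorem \ref{t29}, and your trigonometric translation (including the correct handling of the two compensating sign changes between \eqref{k1}/\eqref{k2} and \eqref{el-h22}/\eqref{el-s1}, which yield $\kappa=-\sigma'-\cot(u)\sin\sigma$ and the same separable ODE $u^\alpha\sin(u)\sin\sigma=c$) carries through exactly as you describe.
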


We determine the admissible intervals of $u$ for the integrals in \eqref{classif-s}. Set 
$$f(u)=u^\alpha\sin(u),\quad I=\{u\in (0,\pi):f(u)>c\}.$$
 Also, let $f(u_0)=c$ with $u_0\in I$.
\begin{enumerate}
\item Case $\alpha=-1$.
$$
I=\begin{cases}
(0,u_0), & 0<c< 1,\\
\emptyset, & c\geq1,
\end{cases}
$$
where $u_0$ is the unique solution of $f(u)=c$.
\item Case $\alpha>-1$. 
$$
I=\begin{cases}
[u_0^1,u_0^2], & 0<c\leq f(\bar{u}),\\
\emptyset, & c>f(\bar{u}),
\end{cases}
$$
where $f(u_0^i)=c$ and $0<u_0^1<\bar{u}<u_0^2<\pi$ such that $\bar{u}$ the critical point of $f(u)$.
\item Case $\alpha<-1$. $I=(0,u_0)$.
\end{enumerate}

Figures of $\alpha$-stationary curves in $\s^2$ for different values of $\alpha$ are shown in Figure \ref{fig2}.

\begin{figure}[h]
\begin{center}
\includegraphics[width=.3\textwidth]{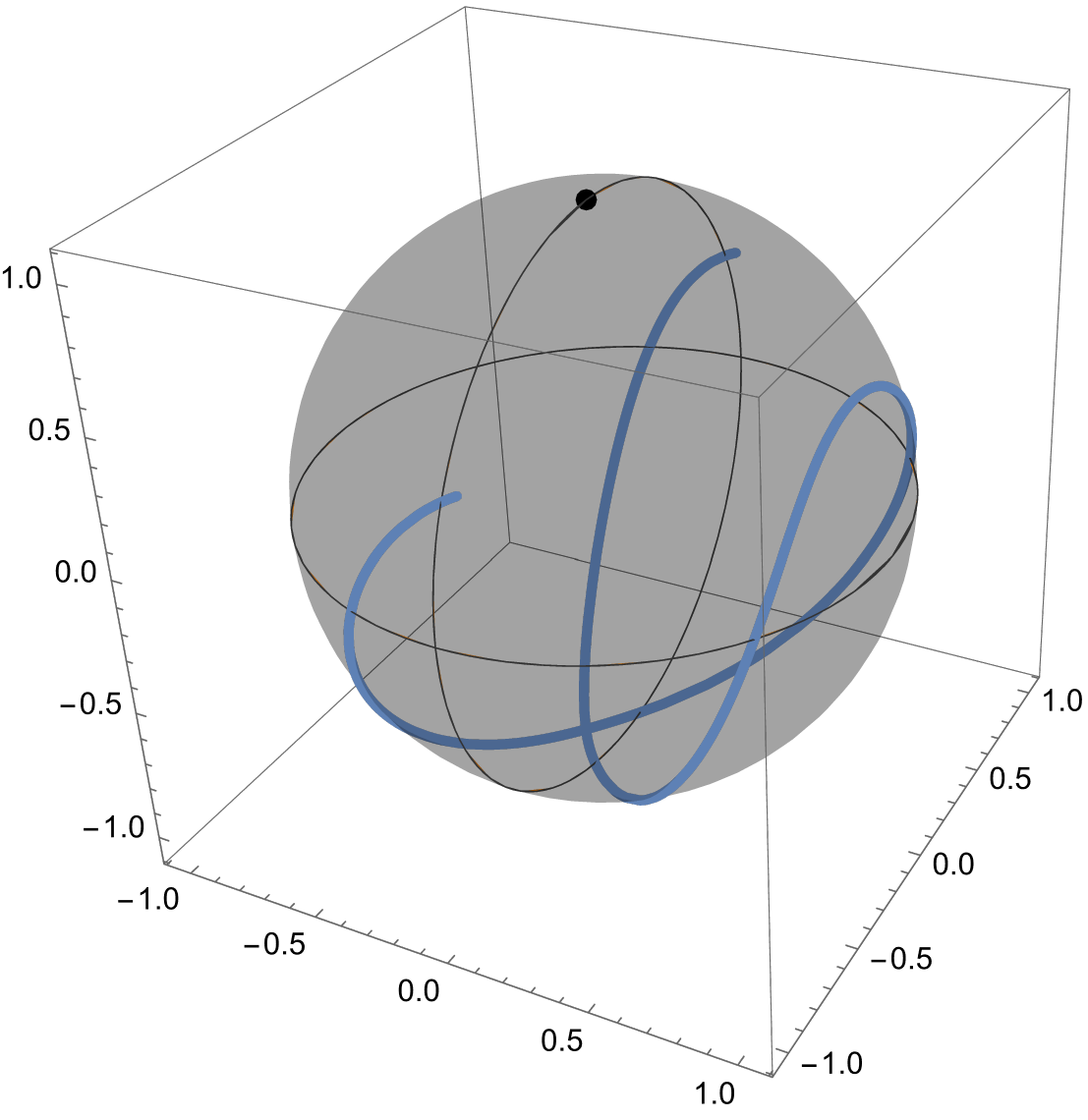} 
\includegraphics[width=.3\textwidth]{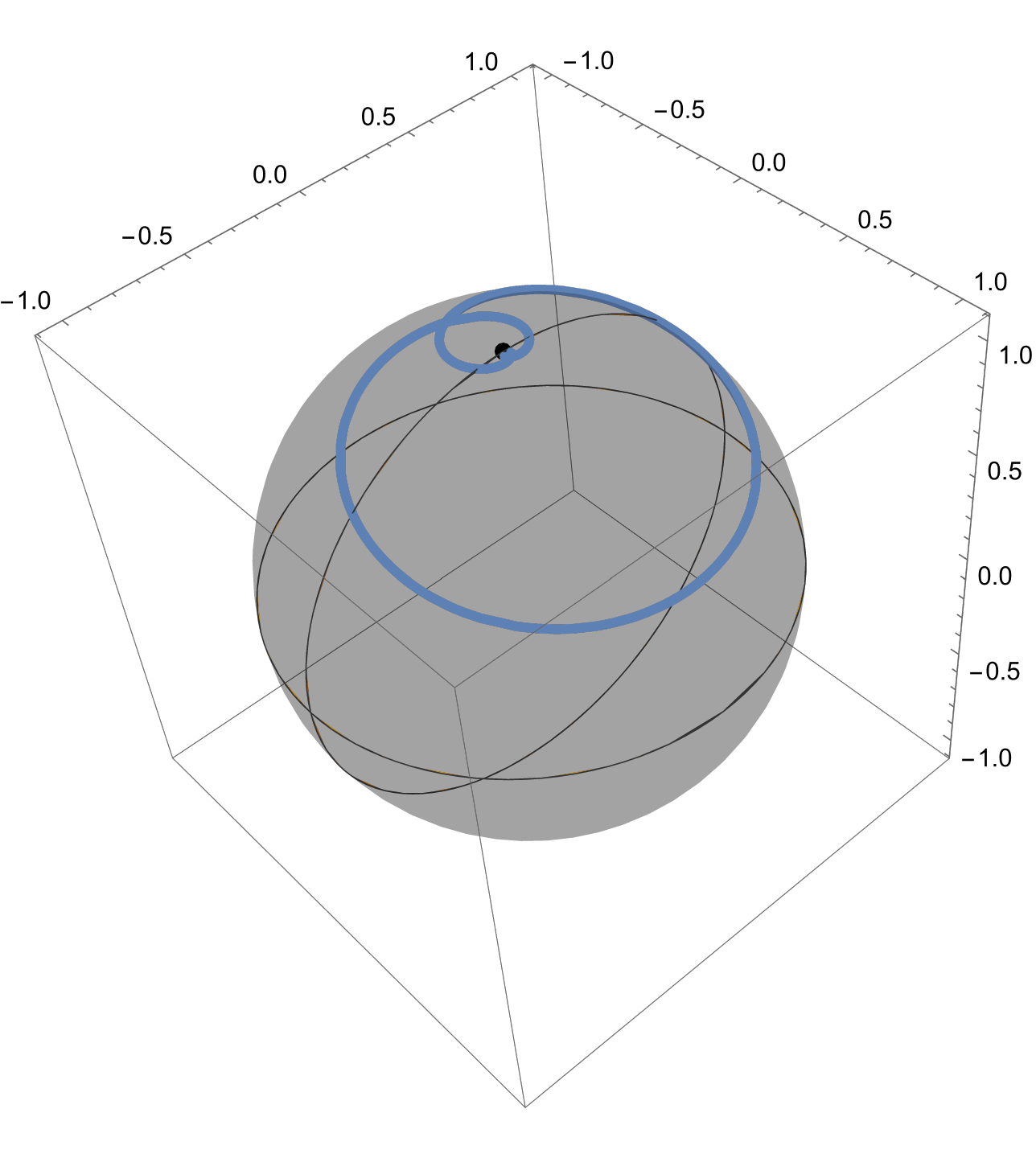}
\includegraphics[width=.3\textwidth]{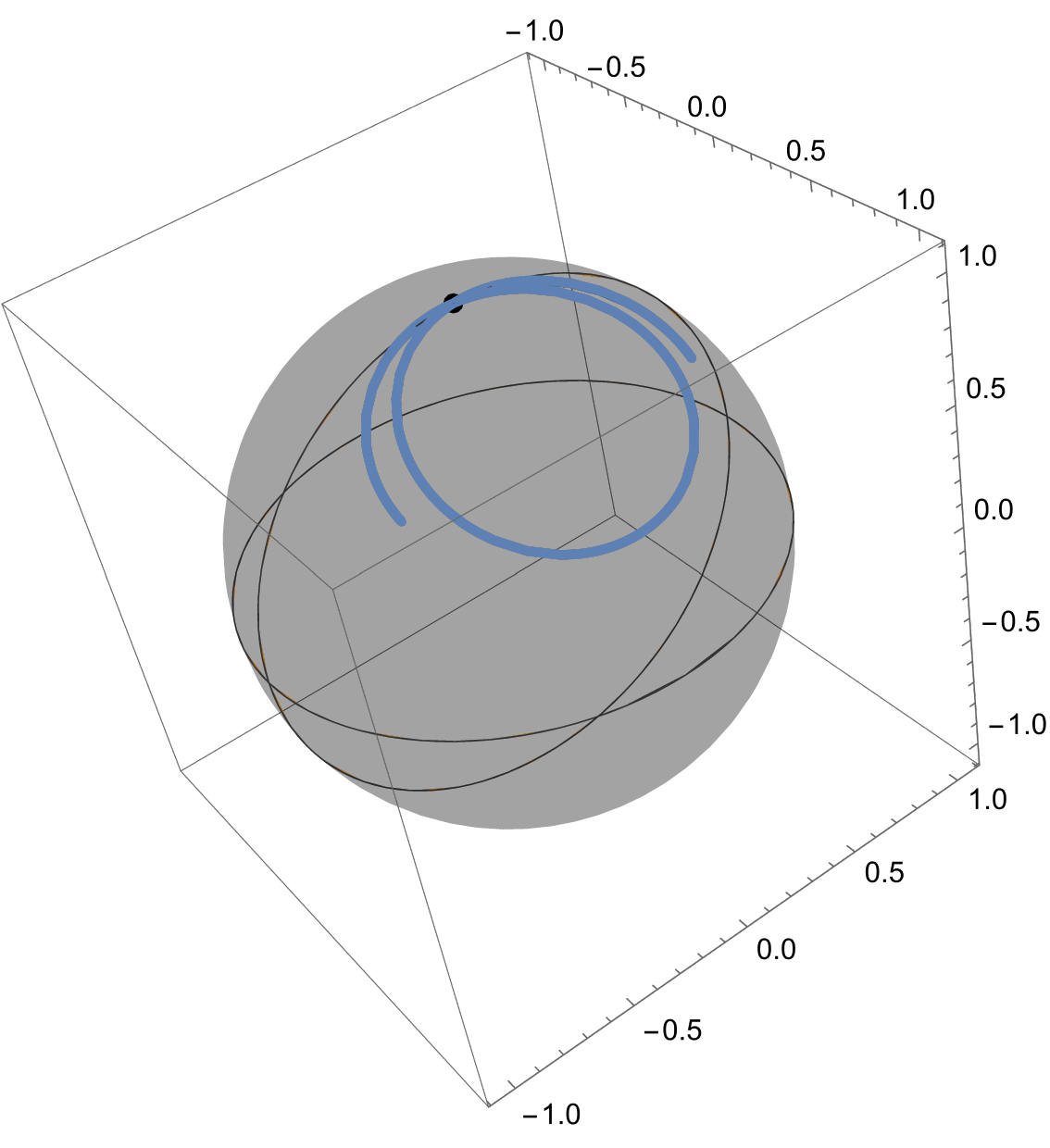}
\end{center}
\caption{Examples of $\alpha$-stationary curves in $\s^2$ for   $\alpha=2$ (left), $\alpha=-1$ (middle) and $\alpha=-2$ (rigth).}
\label{fig2}
\end{figure}

\subsection{Energy minimization problem in $\s^2$}
We address the problem of finding  minimizers     of the energy $E_\alpha$ between two given points. As in the previous section, we only consider the case that  $p_1,p_2\in\s^2$ lie on the same geodesic with  $N$. In the following result, we understand a ray starting at $N$ as a minimizing  (for the length) geodesic starting at $N$. 

\begin{theorem} \label{ts1}
Let $p_1,p_2\in\s^2$ be two points lying on the same geodesic with $N$. 
\begin{enumerate}
\item If $p_1$ and $p_2$ lie on the same ray starting at $N$, then the piece of this ray joining the two points is the minimizer of $E_\alpha$ for all $\alpha$. 
\item Suppose $\alpha>0$. If the minimizing geodesic from $p_1$ to $p_2$ contains  $N$, then this geodesic is the minimizer of $E_\alpha$. 
\end{enumerate}
\end{theorem}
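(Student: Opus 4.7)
The plan is to imitate the strategy of Theorem \ref{th1} in the hyperbolic setting, replacing the hyperbolic line element by its spherical counterpart $\sqrt{u'^2+\sin^2(u)\,v'^2}$ and invoking the triangle inequality in the round metric of $\s^2$ in place of the hyperbolic one.

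For item (1), I would fix $v_0\in\r$ and $0<a_1<a_2<\pi$ with $p_i=\Psi(a_i,v_0)$, so that the piece of the ray joining $p_1$ to $p_2$ is $\beta(t)=\Psi(t,v_0)$, $t\in[a_1,a_2]$, with $E_\alpha[\beta]=\int_{a_1}^{a_2}t^\alpha\,dt$. For any competitor $\gamma(t)=\Psi(u(t),v(t))$ joining $p_1$ to $p_2$, the pointwise inequality $\sqrt{u'^2+\sin^2(u)\,v'^2}\geq u'$ together with the fundamental theorem of calculus yields
$$E_\alpha[\gamma]=\int u^\alpha\sqrt{u'^2+\sin^2(u)\,v'^2}\,dt\geq\int u^\alpha u'\,dt=\int_{a_1}^{a_2}t^\alpha\,dt=E_\alpha[\beta],$$
valid for every $\alpha\in\r$. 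This is essentially identical to the hyperbolic step, since only the radial monotonicity of $u^\alpha$ is used.

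For item (2), set $a_1=-d(p_1,N)<0$ and $a_2=d(N,p_2)>0$. The hypothesis that the minimizing geodesic from $p_1$ to $p_2$ passes through $N$ amounts to $a_2-a_1\leq\pi$, so this geodesic admits an arc-length parametrization $\beta:[a_1,a_2]\to\s^2$ with $\beta(a_1)=p_1$, $\beta(0)=N$, $\beta(a_2)=p_2$, $\d(\beta(t),N)=|t|$, and $E_\alpha[\beta]=\int_{a_1}^{0}(-t)^\alpha\,dt+\int_{0}^{a_2}t^\alpha\,dt$. Given any competitor $\gamma$ parametrized by arc length on $[a_1,b_2]$ with $\gamma(a_1)=p_1$, $\gamma(b_2)=p_2$ (after translating the parameter), the length comparison $b_2-a_1\geq a_2-a_1$ forces $b_2\geq a_2$. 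The triangle inequality on $\s^2$, applied successively at $p_1$ and $p_2$, gives
$$\d(\gamma(t),N)\geq d(p_1,N)-(t-a_1)=-t,\quad t\in[a_1,0],$$
$$\d(\gamma(t),N)\geq d(p_2,N)-(b_2-t)=t-(b_2-a_2),\quad t\in[b_2-a_2,b_2].$$
Since $\alpha>0$, I can discard the nonnegative contribution on $[0,b_2-a_2]$, apply these two pointwise bounds on the endpoint intervals (again using $\alpha>0$ to preserve the inequality under the $\alpha$-th power), and perform the substitution $s=t-(b_2-a_2)$ in the second integral to reach $E_\alpha[\gamma]\geq E_\alpha[\beta]$.

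The main obstacle, and the place where the spherical case genuinely parts ways with the hyperbolic one, is the constraint $a_2-a_1\leq\pi$ encoded in the hypothesis: it is needed both for $\beta$ to be globally length-minimizing (so that $b_2\geq a_2$) and for the triangle-inequality estimates on $\d(\gamma(t),N)$ to remain sharp. When $p_1$ and $p_2$ lie on the same great circle as $N$ but on opposite rays whose minimizing geodesic does not pass through $N$, one has $d(p_1,N)+d(N,p_2)>\pi$ and the strategy above breaks down; this is precisely the situation excluded from the statement, matching the remark in the introduction about the absence of a suitable estimate.
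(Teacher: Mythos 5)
Your proposal is correct and follows essentially the same route as the paper, which simply declares the proof analogous to Theorem \ref{th1}: item (1) is the pointwise bound $\sqrt{u'^2+\sin^2(u)v'^2}\geq u'$, and item (2) is the arc-length/triangle-inequality comparison with the geodesic through $N$. Your version is in fact slightly more explicit than the paper's, since you spell out the two distance estimates $\d(\gamma(t),N)\geq -t$ and $\d(\gamma(t),N)\geq t-(b_2-a_2)$ and identify where the hypothesis that the minimizing geodesic contains $N$ is used, which the paper only addresses in the subsequent remark.
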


\begin{proof}
\begin{enumerate}
\item The proof is analogous to the item (1) of Theorem \ref{th1}.
\item Let $\beta$ be the minimizing (for the length) geodesic joining $p_1$ and $p_2$. The proof is analogous to the item (2) of Theorem \ref{th1} because $\beta$ is a minimizing geodesic, which it is used in the proof.

\end{enumerate}
\end{proof}

\begin{remark} A case not covered in Theorem \ref{ts1} is when  $p_1$ and $p_2$ are not on the same ray and  the minimizing geodesic from $p_1$ to $p_2$ does not pass through $N$. In this case, the geodesic must pass through the south pole $(0,0,-1)$. Such a geodesic can be parametrized by $\sigma\colon [a,\pi]\cup [b,\pi]\to\s^2$, where
\begin{equation*}
\sigma(t)=\left\{\begin{array}{ll} \Psi( t,v_0)& t\in [ a,\pi]\\
\Psi(-t+\pi+b,v_0+\pi)&t\in [b,\pi]
\end{array}\right.
\end{equation*}
with the condition $\pi\leq a+b$. If $\alpha>0$, the energy of $\sigma$ is  $E_\alpha[\sigma]=\frac{1}{\alpha+1}(2\pi^{\alpha+1}-a^{\alpha+1}-b^{\alpha+1})$. Given any curve $\gamma$ joining $p_1$ and $p_2$, its length is greater than that of $\sigma$. However, after parametrizing $\gamma$ by arc-length, when moving the parameter from $p_1$ to $(0,0,-1)$ the distance between $\sigma(t)$ and $N$ increases and we can no longer estimate it in terms of the distance between $\gamma(t)$ and $N$.
\end{remark}

\subsection*{Ethics declarations}

Conflict of interest. The authors have no conflict of interest to declare that are relevant to the content of this article. No data were used to support this study

\subsection*{Acknowledgment}
Rafael L\'opez  has been partially supported by MINECO/MICINN/FEDER grant no. PID2023-150727NB-I00,  and by the ``Mar\'{\i}a de Maeztu'' Excellence Unit IMAG, reference CEX2020-001105- M, funded by MCINN/AEI/10.13039/ 501100011033/ CEX2020-001105-M.


\end{document}